\DeclareMathOperator{\Aut}{Aut}
\DeclareMathOperator{\GL}{GL}
\newcommand{\Z}{{\mathbb Z}}
\newcommand{\Q}{{\mathbb Q}}
\newcommand{\C}{{\mathbb C}}
\newtheorem{theorem}{Theorem}[section]
\newtheorem{proposition}[theorem]{Proposition}
\newtheorem{lemma}[theorem]{Lemma}
\newtheorem{corollary}[theorem]{Corollay}
\newtheorem*{remark}{Remark}
\begin{document}

\title{The $R_\infty$ property for free groups, free nilpotent groups and free solvable groups}
\author[K.\ Dekimpe]{Karel Dekimpe}
\author[D.\ Gon\c{c}alves]{Daciberg Gon\c{c}alves}
\address{KULeuven Kulak\\
E. Sabbelaan 53\\
8500 Kortrijk\\
Belgium}
\email{karel.dekimpe@kuleuven-kulak.be}
\address{Departamento de Matem\'atica--IME--USP\\ 
Universidade de S\~{a}o Paulo\\ 
S\~{a}o Paulo\\
Brasil}
\email{dlgoncal@ime.usp.br}

\begin{abstract}

Let $F$ be either a free nilpotent group of a given class and of finite rank
or a free solvable group of a certain derived length and of finite rank. 
We show precisely which ones have the $R_{\infty}$ property. 
Finally,  we also show that the free group 
of infinite rank does not have the $R_{\infty}$ property. 
\end{abstract}

\maketitle
\section{Introduction}

Let $G$ be a group and $\varphi$ be an endomorphism of $G$. Then two elements 
$x,y$ of $G$ are said to be Reidemeister equivalent, if there exists a third element 
$z\in G$ such that $x = z y \varphi(z)^{-1}$. The equivalence classes  are called the Reidemeister classes or twisted conjugacy classes. The Reidemeister number of $\varphi$, which we will denote by $R(\varphi)$, is the number of those Reidemeister classes of $\varphi$. This number is either a positive integer or $\infty$. 

\medskip

These Reidemeister numbers are a relevant ingredient for studying the fixed point properties of the homotopy class of a self map on a topological space. In this situation, the group $G$ will be the fundamental group $\pi_1(X)$ of the space and the 
morphism $\varphi=f_\sharp$ is the one which is induced by the map $f$. Under certains hypothesis the 
Reidemeister number $R(\varphi)$ is then exactly the number of essential  fixed point 
classes of $f$ if $R(\varphi)$ is finite and the number of essential  fixed point 
classes of $f$ is zero if  $R(\varphi)$ is infinite. See \cite{j} and \cite{w} and the references 
therein for more information.

\medskip

A group $G$ has the   
$R_\infty$ property if for every automorphism $\varphi$ of $G$ the Reidemeister number is 
infinite. In recent years many works have studied the question of 
which groups $G$ have the 
$R_\infty$ property.  We refer to \cite{FLT} for an overview of the results which have been obtained in this direction. 
The present work will also give a contribution for this problem,  where 
we will consider groups $G$ from the families  of free groups, free nilpotent groups and
free solvable groups, of countable rank (finite and infinite). 
The results in our paper are closely related with those in   \cite{GW} and \cite{Ro} 

\bigskip

In this paper we will use $F_r$ to denote the free group on $r>1$ generators $x_1, x_2, \ldots, x_r$. 
We say that $F_r$ is the free group of rank $r$.
Likewise, $F_\infty$ will be used to refer to the free group on a countable number of generators
$x_1,x_2,x_3,\ldots$. When $G$ is a group, we will use $\gamma_i(G)$ to denote the terms of its lower
central series, i.e.
\[ \gamma_1(G)=G,\; \gamma_{i+1}(G)  = [\gamma_i(G), G] \mbox{ for $i>0$}.\]
A group is said to be nilpotent of class $\leq c$ when $\gamma_{c+1}(G)=1$.
The group 
\[ N_{r,c}= F_r/\gamma_{c+1}(F_r) \]
is the free nilpotent group of class $c$ and rank $r$, while the group 
\[ N_{\infty,c}=   F_\infty/\gamma_{c+1}(F_\infty) \]
is free nilpotent of class $c$ and countable rank.

The derived series of a group $G$ is also defined recursively by 
\[ G^{(0)}= G, \; G^{(1)}= [ G,G], \mbox{ and } G^{(i+1)}=  [G^{(i)}, G^{(i)}], \mbox{ for $i>0$}.\]
A group is said to be solvable with derived length $\leq k$ when $G^{(k)}=1$.  A solvable group with derived length $\leq 2 $ is also said to be metabelian.

The group 
\[ S_{r,k}= F_r/F_r^{(k)}\]
is the free solvable group of rank $r$ and derived length $k$,  while the group 
\[ S_{\infty,k}=   F_\infty/F_{\infty}^{(k)}\]
is the  free solvable group of countable rank and of derived length $k$. 

Finally, we will also use 
\[ M_{r,c}= \frac{F_r}{\gamma_{c+1}(F_r) F_r^{(2)}},\] which is the free $c$-step nilpotent and metabelian group  of rank $r$, and  
\[ M_{\infty,c}= \frac{F_{\infty}}{\gamma_{c+1}(F_{\infty}) F_{\infty}^{(2)}},\]  which is the 
free $c$-step nilpotent and metabelian group  of    countable  rank. 

 Note that there 
is a natural projection $N_{r,c}\rightarrow M_{r,c}$ and any automorphism of $N_{r,c}$ induces an automorphism of $M_{r,c}$. 


In this paper we will completely classify which of the above introduced groups have the $R_\infty$ property. Notoriously, we show that $F_\infty$, the free group of infinite rank, does not have property $R_\infty$.
\section{The $R_\infty$ property for free nilpotent groups of finite rank}

In \cite{Ro}, it was shown that all free nilpotent groups $N_{r,c}$ have the $R_\infty$ property provided 
$c\geq 2r $ (with the exception of $r=3$, where it was required that $c\geq 12$). We will provide a new proof of this 
result and moreover, we will show that this result is sharp for $r\neq3$ and we also provide the sharp result for $r=3$.
I.e.\ we will prove the following result in {\bf Theorem~\ref{free-nilpotent-finite-rank}}:

\medskip

{\bf Theorem:}
Let $r>1$ and $c\geq 1$ be positive integers.\\
Then $N_{r,c}$ has the $R_\infty$--property if and only if $c\geq 2r$.

\medskip

Before we are able to prove this theorem, we need to recall how to associate a Lie ring to a given group.

It is well known that to any group $G$, one can associate a graded Lie ring $L(G)= \displaystyle \bigoplus_{i=1}^\infty L_i(G)$ over $\Z$, where 
\[ L_i(G)= \gamma_i(G)/\gamma_{i+1}(G)\]  
and the Lie bracket $[\cdot , \cdot]_L$ is completely determined by 
\[ \forall x \in \gamma_{i}(G), y \in \gamma_{j}(G): \; [ x+ \gamma_{i+1}(G), y+\gamma_{j+1}(G)]_L = [x,y] + \gamma_{i+j+1} (G) \in L_{i+j}(G).\]
In this expression $[x,y]$ denotes the group commutator in $G$.

Any automorphism $\varphi\in \Aut(G)$ determines an automorphism $\varphi_i \in \Aut(\gamma_i(G)/\gamma_{i+1}(G))$ and moreover, the 
linear map of $L(G)$ which restricted to $L_i(G)$ equals $\varphi_i$ is a (graded) Lie ring automorphism.

\medskip

Any endomorpism of a finitely generated abelian group $A$ maps the torsion subgroup 
$\tau(A)$ of $A$ to itself and hence this endomorphism induces an endomorphism 
of the free abelian group $A/\tau(A)$. When we talk about the eigenvalues of such a morphism, we will mean the eigenvalues of the morphism  of $A/\tau(A)\cong \Z^d$,
which can be seen as a $d\times d$--matrix with integer entries.  
 The following result is known and is essentially  Corollary 4.2 of \cite{Ro}. 

\medskip

\begin{lemma}\label{critnilp}
Let $G$ be a finitely generated nilpotent group. For any
automorphism $\varphi$ of $G$ we let $\varphi_i\in \Aut(\gamma_i(G)/\gamma_{i+1}(G))$ denote the induced automorphism. Then we have  that 
\[ R(\varphi)=\infty \Leftrightarrow \exists  i  \mbox{ such that $\varphi_i$ has $1$ as an eigenvalue}.\]
\end{lemma}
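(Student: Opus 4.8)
The plan is to argue by induction on the nilpotency class $c$ of $G$. Write $Z=\gamma_c(G)$; this is a central, characteristic (hence $\varphi$-invariant) subgroup, $\bar G=G/Z$ is nilpotent of class $c-1$ with induced automorphism $\bar\varphi$, and $\bar\varphi_i=\varphi_i$ for all $i<c$. I will lean on three standard facts. First, for a finitely generated abelian group $A$ and $\psi\in\Aut(A)$ one has $R(\psi)=|A/(\psi-\mathrm{id})A|$, which is finite exactly when $\psi-\mathrm{id}$ is injective on $A\otimes\Q$, i.e.\ when $\psi$ has no eigenvalue $1$. Second, for a $\varphi$-invariant normal subgroup $N\trianglelefteq G$ the quotient map induces a surjection of Reidemeister sets, so $R(\bar\varphi)\le R(\varphi)$. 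Third, if $G$ is finitely generated nilpotent and no $\varphi_i$ has eigenvalue $1$, then $\mathrm{Fix}(\varphi)=\{g\in G:\varphi(g)=g\}$ is finite: an infinite order $\varphi$-fixed element would, after replacing it by a suitable power, have infinite order image in some $\gamma_m(G)/\gamma_{m+1}(G)$, fixed there by $\varphi_m$. The case $c=1$ is the first fact. Throughout, $\mathrm{rk}$ denotes torsion-free rank.

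For ``some $\varphi_i$ has eigenvalue $1\ \Rightarrow\ R(\varphi)=\infty$'': if such an $i$ satisfies $i<c$, then $\bar\varphi_i=\varphi_i$ still has eigenvalue $1$, so $R(\bar\varphi)=\infty$ by induction and $R(\varphi)=\infty$ by the second fact. The interesting case is that only $\varphi_c$ has eigenvalue $1$. Then no $\bar\varphi_i$ does, so $\mathrm{Fix}(\bar\varphi)$ is finite, and its preimage $H\le G$ is $\varphi$-invariant with $Z$ of finite index. Because $Z$ is central, $\delta\colon H\to Z,\ w\mapsto w\varphi(w)^{-1}$, is a homomorphism; one identifies $\mathrm{im}\,\delta$ with $[1]_\varphi\cap Z$ and checks that $z_1\sim_\varphi z_2\iff z_1z_2^{-1}\in\mathrm{im}\,\delta$ for $z_1,z_2\in Z$, so $Z/\mathrm{im}\,\delta$ injects into the set of Reidemeister classes. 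Finally $\mathrm{rk}(\mathrm{im}\,\delta)=\mathrm{rk}(H)-\mathrm{rk}(\mathrm{Fix}(\varphi|_H))\le\mathrm{rk}(Z)-\mathrm{rk}(\mathrm{Fix}(\varphi_c))<\mathrm{rk}(Z)$, the last inequality since $\varphi_c$ has eigenvalue $1$; hence $Z/\mathrm{im}\,\delta$ is infinite and $R(\varphi)=\infty$.

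For the converse, ``no $\varphi_i$ has eigenvalue $1\ \Rightarrow\ R(\varphi)<\infty$'': by induction $R(\bar\varphi)<\infty$, so by the second fact it suffices that every fibre of $\mathcal R(\varphi)\twoheadrightarrow\mathcal R(\bar\varphi)$ is finite. The fibre over the class of $\bar g$ consists of the classes $[gz]_\varphi$ with $z\in Z$, and $gz_1\sim_\varphi gz_2$ iff $z_1z_2^{-1}$ lies in the image of the homomorphism $\eta_g\colon H_g\to Z,\ w\mapsto g^{-1}wg\,\varphi(w)^{-1}$, where $H_g$ is the preimage in $G$ of $\mathrm{Fix}(\theta_g)$ and $\theta_g=\varphi^{-1}\circ c_{\bar g^{-1}}$ on $\bar G$ (conjugation by $\bar g^{-1}$ followed by $\varphi^{-1}$). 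Since inner automorphisms act trivially on lower central quotients, $(\theta_g)_i=(\bar\varphi_i)^{-1}$ has no eigenvalue $1$, so $\mathrm{Fix}(\theta_g)$ is finite by the third fact and $Z$ has finite index in $H_g$; moreover $\ker\eta_g\cap Z=\mathrm{Fix}(\varphi_c)$ is finite because $\varphi_c$ has no eigenvalue $1$. Therefore $\mathrm{im}\,\eta_g$ has finite index in $Z$, the fibre $Z/\mathrm{im}\,\eta_g$ is finite, and as there are finitely many fibres, $R(\varphi)<\infty$.

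I expect the main obstacle to be bookkeeping rather than a single hard idea: namely, checking that $\delta$ and $\eta_g$ really are homomorphisms with the stated images and kernels. This is exactly the point at which the centrality of $Z=\gamma_c(G)$ and the vanishing of inner automorphisms on the graded pieces are used, and it is why only the invariants $\varphi_i$ govern finiteness. (One could instead, after quotienting out the finite characteristic torsion subgroup, invoke the known formula $R(\varphi)=|\det(\mathrm{id}-\varphi_\ast)|$ for torsion-free finitely generated nilpotent $G$ together with the factorization $\det(\mathrm{id}-\varphi_\ast)=\prod_i\det(\mathrm{id}-\varphi_i)$ along the lower central filtration, but the inductive argument above keeps things self-contained.)
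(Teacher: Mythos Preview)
Your argument is correct. The induction on the nilpotency class, the verification that the maps $\delta$ and $\eta_g$ are homomorphisms into the central subgroup $Z=\gamma_c(G)$ (using that $w\varphi(w)^{-1}\in Z$ for $w\in H$, resp.\ $w\in H_g$), the identification of the relevant fibres with $Z/\mathrm{im}\,\delta$ and $Z/\mathrm{im}\,\eta_g$, and the Hirsch-rank bookkeeping all go through as you describe. Two small points worth making explicit when you write this up: (i) $\ker\delta=\mathrm{Fix}(\varphi)$ (not just $\mathrm{Fix}(\varphi|_H)$), since every $\varphi$-fixed element already lies in $H$; and (ii) your ``third fact'' is most cleanly argued by filtering the subgroup $\mathrm{Fix}(\varphi)$ by the $\gamma_i(G)$ and observing that if $\mathrm{Fix}(\varphi)$ is infinite then some successive quotient has positive torsion-free rank and embeds into $\mathrm{Fix}(\varphi_i)$.

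As for comparison with the paper: the paper does not give a proof of this lemma at all. It records the statement as known, citing it as essentially Corollary~4.2 of Roman'kov \cite{Ro}, and remarks that the torsion-freeness assumption on the quotients $\gamma_i(G)/\gamma_{i+1}(G)$ made there is easily removed. The route you sketch in your last paragraph---pass to the torsion-free quotient and use the product formula $R(\varphi)=\prod_i |\det(\mathrm{id}-\varphi_i)|$ coming from the addition/product formula for Reidemeister numbers in central extensions---is the approach behind the cited result and is shorter once one is willing to import that machinery. Your inductive argument trades that import for a self-contained treatment; either is acceptable here.
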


\begin{remark}
As $G$ is finitely generated, then also all $\gamma_i(G)/\gamma_{i+1}(G)$ are finitely generated, and hence we can talk about the eigenvalues of $\varphi_i$.
In \cite{Ro} it was assumed that all $\gamma_i(G)/\gamma_{i+1}(G)$ are torsion free. However, there is no difficulty to generalize his result to the general case above.
\end{remark}

Now, for any group $G$, the automorphisms $\varphi_i$ ($i>1$) are completely determined by the map $\varphi_1$. It is then also no 
surprise that for the nilpotent groups $N_{r,c}$ and $M_{r,c}$, we can express the eigenvalues of $\varphi_i$ in terms of the eigenvalues of $\varphi_1$. This is the content of the following lemma.

\begin{lemma}\label{eigenvaluesquotients} Let $G= N_{r,c}$ or $G=M_{r,c}$.
Let $\varphi$ be an automorphism of $G$ and let $\varphi_i$ denote the induced automorphism 
of $\gamma_{i}(G)/\gamma_{i+1}(G)$. Let 
\[ \lambda_1, \lambda_2, \ldots, \lambda_r \in \C\] 
denote the eigenvalues of $\varphi_1$ (listed as many times as their multiplicity). Then any eigenvalue $\mu$ of $\varphi_i$ 
(with $2\leq i \leq c$) can be written as an $i$--fold  product 
\[ \mu =\lambda_{j_1} \lambda_{j_2} \cdots \lambda_{j_i}.\]
Moreover, any such a product in which not all of the $j_i$ are equal does appear as an eigenvalue of $\varphi_i$.
\end{lemma}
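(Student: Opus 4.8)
The plan is to pass to the complexified associated graded Lie algebra and reduce the statement to a leading-term computation. By the excerpt, $\varphi$ induces a graded Lie ring automorphism $\bigoplus_i\varphi_i$ of $L(G)$; tensoring with $\C$ gives a graded Lie $\C$-algebra automorphism $\Phi:=\bigoplus_i(\varphi_i\otimes\C)$ of $L(G)\otimes\C$. Since this algebra is generated by its degree-$1$ part $V:=L_1(G)\otimes\C\cong\C^r$, $\Phi$ is the unique extension to a Lie-algebra automorphism of $\psi:=\varphi_1\otimes\C\in\GL(V)$, whose eigenvalues are exactly $\lambda_1,\dots,\lambda_r$. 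By the classical Magnus--Witt description of the associated graded Lie ring of a free group (resp. free metabelian group), $L(G)\otimes\C$ is the free nilpotent Lie $\C$-algebra of class $c$ on $r$ generators when $G=N_{r,c}$, and the free nilpotent-metabelian one when $G=M_{r,c}$; in either case it is the quotient of the free Lie $\C$-algebra $\mathfrak g$ on $r$ generators by a multi-homogeneous ideal, so it carries the $\mathbb N^r$-grading by content (= multidegree with respect to a fixed basis $e_1,\dots,e_r$ of $V$): $L(G)\otimes\C=\bigoplus_{\mathbf d}U^{\mathbf d}$, with $L_i(G)\otimes\C=\bigoplus_{|\mathbf d|=i}U^{\mathbf d}$.

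The key step is to choose the basis $e_1,\dots,e_r$ so that $\psi$ is upper triangular, $\psi(e_j)\in\lambda_j e_j+\langle e_1,\dots,e_{j-1}\rangle$, assign $e_j$ the weight $j$, and filter $L_i(G)\otimes\C$ by $F^{\le w}:=\bigoplus_{w(\mathbf d)\le w}U^{\mathbf d}$ where $w(\mathbf d)=\sum_j j\,d_j$. I would then check that $\varphi_i\otimes\C$ preserves this filtration and induces on each graded piece $U^{\mathbf d}$ multiplication by the scalar $\prod_j\lambda_j^{d_j}$: applying the Lie-algebra automorphism $\Phi$ to an iterated bracket of content $\mathbf d$ in the $e_{j_k}$'s and substituting $\psi(e_{j_k})=\lambda_{j_k}e_{j_k}+(\text{strictly lower-index terms})$, multilinearity of the bracket yields $(\prod_k\lambda_{j_k})$ times the original bracket plus brackets whose content has strictly smaller weight. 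Because the characteristic polynomial of a filtration-preserving operator is the product of those of its graded pieces, this immediately gives both: every eigenvalue of $\varphi_i$ equals $\prod_j\lambda_j^{d_j}=\lambda_{j_1}\cdots\lambda_{j_i}$ for some $\mathbf d$ with $|\mathbf d|=i$ (the first assertion), and, more precisely, $\prod_j\lambda_j^{d_j}$ is an eigenvalue of $\varphi_i$ exactly when $U^{\mathbf d}\ne0$.

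To finish the second assertion I then only need: if $2\le i\le c$ and $\mathbf d$ (the content of $(j_1,\dots,j_i)$) has at least two nonzero entries, then $U^{\mathbf d}\ne0$. For $G=N_{r,c}$, $U^{\mathbf d}=\mathfrak g^{\mathbf d}$ (as $i\le c$), which is nonzero by Witt's dimension formula whenever $\mathbf d$ is not a multiple of a single coordinate vector. For $G=M_{r,c}$, $U^{\mathbf d}$ is the degree-$\mathbf d$ component of the free nilpotent-metabelian Lie $\C$-algebra, and it is nonzero because it contains the left-normed basis element $[e_a,e_{b_1},\dots,e_{b_{i-1}}]$ of the free metabelian Lie algebra, where $a=\max(\operatorname{supp}\mathbf d)$ and $b_1\le\cdots\le b_{i-1}$ is the nondecreasing listing of the remaining $i-1$ indices (so $a>b_1$, since $\operatorname{supp}\mathbf d$ contains a second element, necessarily smaller than $a$). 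I expect the only real subtlety to be the bookkeeping in the core step: the iterated brackets span but are very far from independent, so the leading-term estimate must be organized through the genuine content-grading of the (metabelian) free Lie algebra rather than through brackets directly; the nonvanishing of the relevant multigraded components, and the identification of $L(G)\otimes\C$ with a free (metabelian) nilpotent Lie algebra, are then standard facts.
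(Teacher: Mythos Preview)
Your argument is correct and follows a genuinely different route from the paper's. The paper complexifies just as you do, but then invokes the Jordan decomposition inside the algebraic group $\Aut(L^\C(G))$: the semisimple part $\varphi_\ast^s$ of $\varphi_\ast$ is again a graded Lie-algebra automorphism with the same eigenvalues on each $L_i^\C(G)$, so one may assume $\varphi_\ast$ is diagonalisable on $V$. With an eigenbasis $X_1,\dots,X_r$ in hand, Chen's basis of left-normed brackets $[\dots[[X_{j_1},X_{j_2}],X_{j_3}],\dots,X_{j_i}]$ with $j_1>j_2\le j_3\le\cdots\le j_i$ consists of honest eigenvectors with eigenvalue $\lambda_{j_1}\cdots\lambda_{j_i}$, and both assertions drop out immediately.

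You bypass the algebraic-group step entirely: putting $\psi$ in upper-triangular form and filtering $L_i^\C(G)$ by the weight $w(\mathbf d)=\sum_j j\,d_j$ of the content, you get a block-triangular description of $\varphi_i\otimes\C$ whose diagonal blocks are the scalars $\prod_j\lambda_j^{d_j}$ on $U^{\mathbf d}$. This is more elementary---no Jordan decomposition in algebraic groups is needed---at the price of a little bookkeeping with the multigrading. Your treatment of the ``moreover'' clause ends up at exactly the same place as the paper's: the non-vanishing of $U^{\mathbf d}$ for $\mathbf d$ with at least two nonzero entries is witnessed by the very same left-normed Chen basis element $[e_a,e_{b_1},\dots,e_{b_{i-1}}]$ with $a>b_1\le\cdots\le b_{i-1}$. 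One small wording point: the graded pieces of your filtration are $\bigoplus_{w(\mathbf d)=w}U^{\mathbf d}$, not the individual $U^{\mathbf d}$, but since the induced map preserves each summand and acts there by the claimed scalar, your conclusion is unaffected.
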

\begin{proof}
We present the proof in case $G=M_{r,c}$. The case where $G=N_{r,c}$ is analogous.  

For $G=M_{r,c}$, the Lie ring $L(G)$ is the free $c$-step nilpotent and metabelian Lie ring over $\Z$.

Let $\varphi_\ast$ denote the Lie ring automorphism of $L(G)$, which is induced by $\varphi$.
It is obvious that the eigenvalues of $\varphi_i$ are exactly the same as the eigenvalues of 
$\varphi_\ast$ restricted to $L_i(G)=\gamma_i(G)/\gamma_{i+1}(G)$.

In order to easily compute these eigenvalues, we will consider the complexification of $L(G)$:
\[ L^\C(G)=L(G)\otimes \C = \sum_{i=1}^{\infty} L_i(G)\otimes \C. \]
 
Then $\varphi_\ast$ can also be seen as an automorphism of the Lie algebra $L^\C(G)$. As 
$\Aut(L^\C(G))$ is an algebraic group, we know that also the semi-simple part $\varphi^s_\ast$ of 
$\varphi_\ast$ is an automorphism of $L^\C(G)$. The automorphism $\varphi^s_\ast$ also preserves the grading of $L(G)$ and moreover, for any $i=1,2,\ldots$ the restrictions of 
$\varphi_\ast$ and $\varphi^s_\ast$ to $L_i^\C(G)$  have the same eigenvalues. Hence, we can assume that $\varphi_\ast$ itself is semi-simple. 

Let us fix a basis $X_1,\; X_2,\ldots, X_r$ of $L_1^\C(G)$ consisting of eigenvectors 
for $\varphi_\ast$ where $X_j$ is an eigenvector for the eigenvalue $\lambda_j$.

Now, it is a well known result due to Chen (\cite{C}) that $L_i(G)$ has a basis consisting of the vectors
\[ [\ldots [[[x_{j_1}, x_{j_2}]_L,x_{j_3}]_L,x_{j_4}]_L,\ldots, x_{j_i}]_L\mbox{ \ with \ }
j_1 > j_2 \leq j_3\leq j_4 \leq \cdots \leq j_i.\]

It is now obvious that these vectors are also eigenvectors for $\varphi_\ast$ with respect to the eigenvalues 
\[ \lambda_{j_1}\lambda_{j_2}\lambda_{j_3} \lambda_{j_4} \cdots \lambda{j_i} .\]

This proves the lemma.
\end{proof}

Having the lemmas above in mind, it should not come as a surprise that the following proposition will be very useful in proving our results on the $R_\infty$--property for free nilpotent groups of
finite rank.

\begin{proposition}\label{keyprop}
For every positive integer $r$ there exists a matrix $A_r\in \GL(r,\Z)$ with 
$r$ distinct eigenvalues $\lambda_1,\lambda_2, \ldots ,\lambda_r$  and such that 
\[ \forall k \in \{1,2,\ldots,2r-1\},\;
\forall i_1, i_2, \ldots, i_k \in \{1,2,\ldots, r\}: \;
\lambda_{i_1} \lambda_{i_2} \cdots \lambda_{i_k} \neq 1.\]
\end{proposition}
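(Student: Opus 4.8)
The plan is to realize $A_r$ as the companion matrix of a carefully chosen integer polynomial and to control the multiplicative relations among its eigenvalues by reducing modulo a suitable prime. First I would fix an odd prime $p$ with $p>2r-1$ and set $\Phi=1+p+\cdots+p^{r-1}=\frac{p^{r}-1}{p-1}$. Since $p$ is odd, $2\Phi$ divides $p^{r}-1$, so the cyclic group $\mathbb{F}_{p^{r}}^{\times}$ contains an element $\alpha$ of multiplicative order exactly $2\Phi$. Any such $\alpha$ automatically has degree $r$ over $\mathbb{F}_{p}$: its degree is the order of $p$ modulo $2\Phi$, which divides $r$ because $2\Phi\mid p^{r}-1$; and it cannot be a proper divisor $d$ of $r$, since that would force $2\Phi\mid p^{d}-1$, impossible as $2\Phi>\Phi\ge p^{r-1}\ge p^{d}>p^{d}-1>0$. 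Let $\bar f\in\mathbb{F}_{p}[t]$ be the minimal polynomial of $\alpha$; it is monic, separable, irreducible of degree $r$, with root set $\{\alpha,\alpha^{p},\dots,\alpha^{p^{r-1}}\}$, and a direct computation gives $\bar f(0)=(-1)^{r}\alpha^{\Phi}=(-1)^{r+1}$ since $\alpha^{\Phi}$ is the unique element of order $2$ in $\mathbb{F}_{p^{r}}^{\times}$. Thus one can lift $\bar f$ to a monic $f\in\Z[t]$ of degree $r$ with constant term $f(0)=(-1)^{r+1}\in\{1,-1\}$, and take $A_{r}$ to be the companion matrix of $f$. Then $A_{r}\in M_{r}(\Z)$ with $\det A_{r}=(-1)^{r}f(0)=-1$, so $A_{r}\in\GL(r,\Z)$; and since $\operatorname{disc}(f)\equiv\operatorname{disc}(\bar f)\not\equiv 0\pmod p$, the polynomial $f$ is separable, so $A_{r}$ has $r$ distinct complex eigenvalues $\lambda_{1},\dots,\lambda_{r}$, namely the roots of $f$.

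Next I would check the non‑vanishing condition by contradiction. A product $\lambda_{i_{1}}\cdots\lambda_{i_{k}}$ with $i_{1},\dots,i_{k}\in\{1,\dots,r\}$ has the form $\prod_{j=1}^{r}\lambda_{j}^{a_{j}}$ with $a_{j}\ge 0$ and $\sum_{j}a_{j}=k$. Suppose such a product equals $1$ with $1\le k\le 2r-1$. Choose a prime $\mathfrak{p}$ above $p$ in the splitting field of $f$ and reduce modulo $\mathfrak{p}$. Since $f$ and $\bar f$ are separable, the reduction map is a bijection from $\{\lambda_{1},\dots,\lambda_{r}\}$ onto $\{\alpha,\alpha^{p},\dots,\alpha^{p^{r-1}}\}$ (no two roots of $f$ are congruent modulo $\mathfrak{p}$ because $p\nmid\operatorname{disc}(f)$), so $\bar\lambda_{j}=\alpha^{p^{\sigma(j)}}$ for a bijection $\sigma\colon\{1,\dots,r\}\to\{0,\dots,r-1\}$. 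Reducing the relation yields $\alpha^{M}=1$ with $M=\sum_{c=0}^{r-1}b_{c}p^{c}$, where $(b_{c})$ is the tuple $(a_{j})$ rearranged; in particular $b_{c}\ge 0$ and $\sum_{c}b_{c}=k$. As $M>0$ and $\operatorname{ord}(\alpha)=2\Phi$, we may write $M=2t\Phi$ with $t\ge 1$.

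The crux is then the estimate $\sum_{c}b_{c}\ge 2r$, which contradicts $\sum_{c}b_{c}=k\le 2r-1$ and finishes the argument. I would prove it by two cases. If $2t<p$, then $M=\sum_{c=0}^{r-1}(2t)p^{c}$ is already the base‑$p$ expansion of $M$; since passing to the base‑$p$ expansion (by repeated carrying) never increases the digit sum, $\sum_{c}b_{c}\ge 2tr\ge 2r$. If $2t\ge p$, then $M\ge p\Phi>p^{r}$, whereas $M=\sum_{c}b_{c}p^{c}\le\bigl(\sum_{c}b_{c}\bigr)p^{r-1}$, which forces $\sum_{c}b_{c}>p>2r-1$. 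In both cases the contradiction follows.

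The only genuinely non‑routine point is the choice of the residue target $\alpha$. The constraint $A_{r}\in\GL(r,\Z)$ forces $\det A_{r}=\pm1$, hence each $\bar\lambda_{j}$ is a unit of norm $\pm1$ over $\mathbb{F}_{p}$ and therefore has order dividing $2\Phi$; so the reductions of the eigenvalues can never be made multiplicatively independent, and the best one can hope for is $\operatorname{ord}(\alpha)=2\Phi$ — the value $\Phi$ instead would give $\det A_{r}=+1$ together with the short relation $\prod_{j}\lambda_{j}=1$ — which pushes the shortest forced relation, $\bigl(\prod_{j}\lambda_{j}\bigr)^{2}=1$, out to length $2r$. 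Everything else is bookkeeping with base‑$p$ digit sums. (Incidentally, since $\det A\in\{\pm1\}$ always yields a product of at most $2r$ eigenvalues equal to $1$, this construction also shows that the number $2r-1$ in the statement is optimal, which matches the sharpness asserted in Theorem~\ref{free-nilpotent-finite-rank}.)
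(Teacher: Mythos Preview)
Your argument is correct and follows a genuinely different route from the paper. The paper takes $A_r$ to be the companion matrix of $x^{r}-3x^{r-1}+(-1)^{r+1}$, a polynomial of Pisot type: one real root $\theta_1$ has $|\theta_1|>1$ while all the others lie strictly inside the unit circle (Rouch\'e). Irreducibility follows at once, and the absence of short multiplicative relations is proved by an archimedean/Galois argument taken from Bell--Hare: in any relation $\prod_i\theta_i^{d_i}=1$ one normalises so that all $d_i\ge 0$ and, after applying a suitable Galois automorphism, $d_1=0$; the remaining factors all have modulus $<1$, forcing every exponent to vanish (and the parity of the common shift gives the factor $2$). You work instead non-archimedeanly: you arrange that the reductions of the eigenvalues at a prime above $p$ are the Frobenius conjugates $\alpha,\alpha^{p},\dots,\alpha^{p^{r-1}}$ of an element of the maximal admissible order $2\Phi$ in $\mathbb{F}_{p^{r}}^{\times}$, so that a short relation becomes $\alpha^{M}=1$ with $M=\sum_c b_c p^{c}$, which you eliminate by a base-$p$ digit-sum estimate. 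The paper's approach produces one explicit matrix and actually yields the stronger conclusion that \emph{every} multiplicative relation among the $\lambda_i$ is a power of $(\prod_i\lambda_i)^{2}=1$; your approach avoids Rouch\'e and the Galois transitivity step, replacing them with a small amount of algebraic number theory (reduction modulo a prime not dividing the discriminant), and gives a whole family of matrices, since any integral lift of $\bar f$ with constant term $(-1)^{r+1}$ works. Both make the optimality of $2r-1$ transparent via $\det A_r=-1$. One cosmetic remark: in your Case~2 the strict inequality $p\Phi>p^{r}$ fails for $r=1$ (it is an equality), but the conclusion is immediate there anyway since then $\sum_c b_c=b_0=M\ge p>1$.
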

Remark: this is really the best possible result, because for $k=2r$, we have that the following product 
\[ \lambda_1\lambda_1 \lambda_2\lambda_2 \ldots \lambda_r \lambda_r = \det(A_r)^2= 1.\]
Note that we must also have that $A_r$ has determinant $-1$.

\medskip

The idea for proving this lemma is to make use of so called Pisot numbers (also called Pisot--Vijayaraghavan numbers). 

A Pisot number is a real algebraic integer $\theta$ greater than 1, such that all conjugates of $\theta$ lie strictly inside the unit disk (so are of modulus $<1$). Actually,
for us, $\theta$ does not have to be positive, we only need $|\theta|>1$.

\medskip

In the following lemma, we show how one can construct such numbers of any degree and prove some interesting properties about them.

\begin{lemma}\label{roots}
Let $n>1$ be an integer and let 
$p(x)= x^n +a_{n-1}x^{n-1} + \cdots + a_2 x^2 + a_1  x + (-1)^{n+1}$, for some 
integers $a_i$ with 
$|a_{n-1}|> |a_{n-2}| + \cdots + |a_2| + |a_1| + 2$

Then 
\begin{enumerate}
\item $p(x)$ has one real root $\theta_1$ with $|\theta_1|>1$ and all other roots lie strictly inside the unit circle.
\item $p(x)$ is an irreducible polynomial over $\Q$ and hence $p(x)$ is the minimal polynomial of $\theta_1$.
\item If $\theta_2, \theta_3, \ldots, \theta_n\in \C$ denote the other roots of $p(x)$,
then $\theta_1\theta_2\ldots \theta_n=-1$.
\item If for some $d_1,d_2,\ldots,d_n\in \Z$ we have that $\theta_1^{d_1} \theta_2^{d_2} \ldots \theta_n^{d_n} =1$, then there
exists an integer $z\in \Z$ such that $d_1=d_2=\cdots = d_n=2 z$.
\end{enumerate}
\end{lemma}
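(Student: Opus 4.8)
The plan is to handle the four items more or less in sequence, using Rouché's theorem as the main tool for the location of the roots and then elementary Galois theory for the rest.

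First, for item (1), I would apply Rouché's theorem on the unit circle $|x|=1$, comparing $p(x)$ with the single monomial $a_{n-1}x^{n-1}$. On $|x|=1$ we have $|a_{n-1}x^{n-1}| = |a_{n-1}|$, while the sum of the absolute values of all the other terms of $p$ is at most $1 + |a_{n-2}| + \cdots + |a_1| + 1 = |a_{n-2}| + \cdots + |a_1| + 2 < |a_{n-1}|$ by hypothesis. Hence $p(x)$ and $a_{n-1}x^{n-1}$ have the same number of zeros (namely $n-1$) inside the unit disk, counted with multiplicity. So exactly one root, call it $\theta_1$, satisfies $|\theta_1|\geq 1$; a quick check that $p$ has no root on $|x|=1$ (again from the strict inequality) upgrades this to $|\theta_1|>1$ and shows all other roots lie strictly inside. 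Since $p$ has real coefficients and all non-real roots come in conjugate pairs of equal modulus $<1$, the unique root of modulus $>1$ must be real, giving (1).

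For item (2), I would argue irreducibility directly from (1): if $p = q\cdot s$ were a nontrivial factorization over $\Q$ with $q,s$ monic in $\Z[x]$ (Gauss's lemma), then one of the factors, say $s$, does not have $\theta_1$ as a root, so all roots of $s$ lie strictly inside the unit disk; but then $|s(0)|$, being the product of the moduli of those roots, is strictly less than $1$, contradicting $s(0)\in\Z\setminus\{0\}$ (note $s(0)\neq 0$ since $p(0)=(-1)^{n+1}\neq 0$). Hence $p$ is irreducible and is the minimal polynomial of $\theta_1$. Item (3) is immediate: $\theta_1\theta_2\cdots\theta_n = (-1)^n \cdot p(0) = (-1)^n(-1)^{n+1} = -1$.

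The substantive part is item (4), and I expect it to be the main obstacle. Suppose $\theta_1^{d_1}\cdots\theta_n^{d_n}=1$. The Galois group $\Gamma$ of the splitting field of $p$ acts transitively on $\{\theta_1,\ldots,\theta_n\}$ since $p$ is irreducible. The idea is to exploit that $\theta_1$ is the only root of modulus $>1$: for any $\sigma\in\Gamma$, applying $\sigma$ to the relation $\prod_i \theta_i^{d_i}=1$ gives $\prod_i \sigma(\theta_i)^{d_i}=1$, i.e.\ a new multiplicative relation among the same roots with the exponents permuted. Taking absolute values and then logarithms, one gets a system of linear relations on the vector $(d_1,\ldots,d_n)$; combined with $\sum_i \log|\theta_i| = \log|{-1}| = 0$ from (3), and the fact that $\log|\theta_1|>0$ while $\log|\theta_i|<0$ for $i\geq 2$, one should be able to force $d_1 = d_2 = \cdots = d_n$. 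Concretely: averaging $\log\bigl|\prod_i\sigma(\theta_i)^{d_i}\bigr|=0$ over all $\sigma\in\Gamma$ shows $(\sum_i d_i)\cdot\bigl(\tfrac1n\sum_i\log|\theta_i|\bigr)=0$, which is automatically satisfied; the real input is to use the relation for a transposition-type argument, or better, to note that $\prod_i\theta_i^{d_i}=1$ together with $\prod_i\theta_i = -1$ from (3) gives $\prod_i \theta_i^{d_i - d_1} = (-1)^{-d_1}$, and now the left side is an algebraic integer all of whose conjugates (obtained by $\sigma\in\Gamma$) have absolute value determined by the exponents; since only $\theta_1$ is large, a careful bounding of $\bigl|\prod_i\theta_i^{d_i-d_1}\bigr|$ and of its conjugates forces every $d_i - d_1$ to vanish, because any nonzero exponent pattern would make some conjugate have absolute value $\neq 1$ (using that $\theta_1$ is a Pisot-type number with a unique large conjugate). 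Thus $d_1 = \cdots = d_n =: d$, and then $\prod_i\theta_i^{d} = (-1)^d = 1$ forces $d$ even, say $d = 2z$. Writing out the absolute-value and conjugate bounds cleanly is the part that will require the most care, but the mechanism — "a product of conjugates equal to a root of unity, with exactly one conjugate off the unit circle, must have all exponents equal" — is the heart of it.
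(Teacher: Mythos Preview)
Your treatment of items (1)--(3) is correct and essentially identical to the paper's argument (the paper in fact only cites a reference for (1), so your explicit Rouch\'e computation is more complete).

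For item (4) your instincts are right --- Galois transitivity together with the Pisot property is exactly the mechanism --- but the argument as written has a gap at the decisive step. You subtract $d_1$ from all exponents and then appeal to a ``careful bounding'' of conjugates; however, after that subtraction the exponents $d_i - d_1$ may have mixed signs, so the modulus of $\prod_i \theta_i^{d_i - d_1}$ is not controlled simply by knowing $|\theta_i|<1$ for $i\geq 2$. Worse, the product already equals $\pm 1$, so every Galois conjugate of it has modulus exactly $1$; the assertion ``any nonzero exponent pattern would make some conjugate have absolute value $\neq 1$'' is precisely the statement to be proved, not a tool you can invoke.

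The paper's clean move (following Bell--Hare) is to subtract $m=\min_i d_i$ rather than $d_1$, so that all new exponents $d_i-m$ are nonnegative and at least one, say $d_k-m$, is zero. Now choose $\sigma$ in the Galois group with $\sigma(\theta_k)=\theta_1$ (transitivity) and apply $\sigma$ to the relation $\prod_i\theta_i^{d_i-m}=(-1)^m$; the resulting relation has exponent $0$ on $\theta_1$ and reads $\theta_2^{e_2}\cdots\theta_n^{e_n}=(-1)^m$ with all $e_j\geq 0$. The left side then has modulus $\prod_{j\geq 2}|\theta_j|^{e_j}\leq 1$, with equality only when every $e_j=0$; since the right side has modulus $1$, this forces all $e_j=0$ and $m$ even. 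Unwinding through $\sigma$ gives $d_1=\cdots=d_n=m=2z$. The trick ``subtract the minimum, then use transitivity to move the zero exponent onto the large root'' is the concrete step your sketch is missing.
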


\begin{proof} $ $
\begin{enumerate}
\item This is a well known consequence  of Rouch\'e's Theorem. E.g.\ see theorem 2.2.5 and its proof on page 56 of 
\cite{Pr}.
\item Suppose that $p(x)=q(x) r(x)$, where $q(x)$ and $r(x)$ are monic polynomials, then 
both $q(x), r(x) \in \Z[x]$. But one of these polynomials, say $q(x)$,  only has (non zero) roots of modulus
$<1$. But then the constant term of $q(x)$ is, up to sign, the product of these roots and hence is a non-zero 
integer of modulus $<1$, which is a contradiction. It follows that $p(x)$ is the minimal polynomial of $\theta_1$.
\item Note that the constant term of $p(x)= (-1)^n \theta_1\theta_2\ldots \theta_n$, showing that 
$\theta_1\theta_2\ldots \theta_n=-1$
\item This part is essentially Proposition 3.1 of \cite{BH}. For the reader's convenience, we recall the argument here.\\
Suppose that 
\[ \theta_1^{d_1} \theta_2^{d_2} \ldots \theta_n^{d_n} =1 \]
Now, let $m=\min \{d_1,d_2,\ldots d_n\}$, then 
\[ \theta_1^{d_1-m} \theta_2^{d_2-m} \ldots \theta_n^{d_n-m} =(\theta_1 \theta_2 \ldots \theta_n )^{-m}= (-1)^m.\]
Note that $d_i-m \geq 0$ for all $i$ and that there exists a $k$ such that $d_k-m=0$.
As the Galois group of $\Q(\theta_1,\theta_2,\ldots,\theta_n)$ over $\Q$ acts transitively on 
the set $\{ \theta_1,\theta_2, \ldots,\theta_n\}$, there exists a $\sigma$ in this Galois group mapping $\theta_k$ 
onto $\theta_1$. By applying $\sigma$ to both sides of the equation above, we may assume that $d_1-m=0$. But then 
\[ \theta_2^{d_2-m} \ldots \theta_n^{d_n-m}= (-1)^m\]
where the left hand side consists of a product of powers of elements $\theta_i$ of modulus $<1$. This equation can only be satisfied
when all $d_i-m=0$ and $m$ is even. Hence we have shown that $\exists z \in \Z$ such that 
\[ d_1=d_2=\cdots =d_n=2 z. \]
\end{enumerate}
\end{proof}

\medskip

We are now ready for:
\begin{proof}[Proof of Proposition \ref{keyprop}]
For $r=1$, we consider the $1\times 1$--matrix $(-1)$.\\
For $r>1$ we let $A_r$ be the companion matrix of the polynomial $p(x)=x^r-3x^{r-1} +(-1)^{r+1}$. Then, the characteristic polynomial 
of $A_r$ is exactly $p(x)$, which satisfies the criteria of lemma~\ref{roots}, and the last part of lemma above shows that 
the eigenvalues of $A_r$, which are the roots of $p(x)$, satisfy the condition claimed in the proposition. 

\end{proof}

\begin{theorem}\label{free-nilpotent-finite-rank}
Let $r,c$ be positive integers, with $r>1$. Let $G=N_{c,r}$ or $G=M_{c,r}$.\\
Then $G$ has property $R_\infty$ $\Leftrightarrow$ $c \geq 2 r$
\end{theorem}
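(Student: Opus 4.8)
The plan is to prove both directions of the equivalence, treating $G=N_{r,c}$ and $G=M_{r,c}$ simultaneously since Lemma~\ref{eigenvaluesquotients} and Lemma~\ref{critnilp} apply to both. By Lemma~\ref{critnilp}, $G$ has property $R_\infty$ if and only if \emph{every} automorphism $\varphi$ has some $\varphi_i$ with eigenvalue $1$, and by Lemma~\ref{eigenvaluesquotients} the eigenvalues of the $\varphi_i$ are exactly (up to the harmless exception of $i$-fold powers $\lambda_j^i$) the $i$-fold products of the eigenvalues $\lambda_1,\dots,\lambda_r$ of $\varphi_1$. So the whole problem reduces to a statement about products of the eigenvalues of $\varphi_1\in\GL(r,\Z)$ (more precisely, of the induced map on $G^{\mathrm{ab}}/\mathrm{tors}\cong\Z^r$).

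For the direction ``$c\geq 2r \Rightarrow R_\infty$'': I would show that for \emph{any} $\varphi\in\Aut(G)$, the induced $\varphi_1$ lies in $\GL(r,\Z)$, so its eigenvalues $\lambda_1,\dots,\lambda_r$ are algebraic integers with $\prod\lambda_j=\pm1$. If $\det(\varphi_1)=-1$, then $\lambda_1^2\lambda_2^2\cdots\lambda_r^2=1$ is a product of $2r$ eigenvalues with not all indices equal (since $r>1$), hence is an eigenvalue of $\varphi_{2r}$, and since $2r\leq c$ this forces $R(\varphi)=\infty$. If $\det(\varphi_1)=1$, then already $\lambda_1\lambda_2\cdots\lambda_r=1$ is a product of $r\leq 2r\leq c$ eigenvalues with not all indices equal, and again $\varphi_r$ has eigenvalue $1$ (note $r\geq 2$ so ``not all equal'' is satisfiable — one should just check $r=2$ separately or observe the product $\lambda_1\lambda_2$ directly has two distinct indices). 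Thus in all cases some $\varphi_i$ with $i\leq c$ has $1$ as an eigenvalue, and $R(\varphi)=\infty$.

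For the converse ``$c<2r \Rightarrow$ not $R_\infty$'': here I would exhibit a single automorphism $\varphi$ of $G$ with $R(\varphi)<\infty$, equivalently (by the two lemmas) with no $\varphi_i$, $1\leq i\leq c$, having eigenvalue $1$. The key is Proposition~\ref{keyprop}: take the matrix $A_r\in\GL(r,\Z)$ with distinct eigenvalues $\lambda_1,\dots,\lambda_r$ such that no product of at most $2r-1$ of them equals $1$. Since $A_r\in\GL(r,\Z)$, it induces an automorphism $\varphi_1$ of $G^{\mathrm{ab}}\cong\Z^r$, and because $G$ is free in its variety (free nilpotent, resp.\ free nilpotent-metabelian), $\varphi_1$ lifts to an automorphism $\varphi$ of $G$ — this is the point where one uses that $F_r\to G$ is the relatively free object, so any map on generators inducing an invertible map on the abelianization is invertible. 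For $c\leq 2r-1$, every eigenvalue of every $\varphi_i$ with $i\leq c$ is a product $\lambda_{j_1}\cdots\lambda_{j_i}$ with $i\leq 2r-1$, which by Proposition~\ref{keyprop} is never $1$. Hence no $\varphi_i$ has eigenvalue $1$, so $R(\varphi)<\infty$ by Lemma~\ref{critnilp}, and $G$ does not have $R_\infty$.

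I expect the main obstacle to be the lifting step in the converse: verifying carefully that an automorphism of the abelianization of $G$ (or of $\Z^r=G^{\mathrm{ab}}/\mathrm{tors}$) genuinely lifts to an automorphism of $N_{r,c}$ or $M_{r,c}$. For free nilpotent and free nilpotent-metabelian groups this is standard — one defines $\varphi$ on generators by any lift of the matrix action and uses the universal property of the relatively free group together with the fact that a surjective endomorphism of a finitely generated nilpotent (hence Hopfian) group is an automorphism — but it must be stated cleanly. A secondary subtlety worth a sentence is the low-rank bookkeeping in the forward direction: when $\det\varphi_1=1$ one wants a product of eigenvalues equal to $1$ with not all indices equal, and for $r=2$ this is just $\lambda_1\lambda_2=1$ (distinct indices, fine), so no genuine edge case arises, but it deserves an explicit remark. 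Everything else is a direct combination of Lemmas~\ref{critnilp} and~\ref{eigenvaluesquotients} with Proposition~\ref{keyprop}.
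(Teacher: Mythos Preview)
Your proposal is correct and follows essentially the same approach as the paper: both directions rest on Lemma~\ref{critnilp}, Lemma~\ref{eigenvaluesquotients}, and Proposition~\ref{keyprop}, with the lifting $\GL(r,\Z)\to\Aut(G)$ handled via the relatively-free structure of $G$. The only cosmetic difference is that the paper does not split the forward direction into the cases $\det(\varphi_1)=\pm1$; it goes straight to the $2r$-fold product $(\lambda_1\cdots\lambda_r)^2=\det(\varphi_1)^2=1$, which already has not-all-equal indices since $r>1$, so your case analysis (and the attendant worry about $r=2$) is unnecessary.
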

\begin{proof}
Let $\varphi$ be an automorphism of $G$ and let  
$\varphi_1$ be the induced automorphism on $G/\gamma_2(G)\cong \Z^r$ and denote the eigenvalues of $\varphi_1$ 
by $\lambda_1, \lambda_2, \ldots , \lambda_r$.  

\medskip

By lemma~\ref{eigenvaluesquotients} we know that the eigenvalues of the induced 
automorphism $\varphi_i$ on the  quotient 
$\gamma_i(G)/\gamma_{i+1}(G)$ consist of products of the form 
\[ \lambda_{j_1} \lambda_{j_2} \ldots \lambda_{j_i}.\]
Now, if $c \geq 2r$, we can consider the quotient $\gamma_{2r}(G)/\gamma_{2r+1}(G)$, on which 
the induced automorphism has 
\[ (\lambda_1\lambda_2\cdots \lambda_r)^2=\det(A)^2=1 \]
as one of its eigenvalues. Hence $R(\varphi)=\infty$, by lemma~\ref{critnilp}.

\medskip

On the other hand, if $c<2r$, then consider an automorphism $\varphi$ of $G$ inducing the automorphism $A_r$ from proposition~\ref{keyprop} on  $G/\gamma_2(G)\cong \Z^r$. This is possible since it is known that 
the natural map 
\[\Aut(F_{r}) \rightarrow \Aut \left( \frac{F_{r}}{\gamma_2(F_{r})}\right)=\GL(r,\Z)\]
is onto. From this, it easily follows that also the natural map 
\[ \Aut(G)\rightarrow \Aut(G/\gamma_2(G))=\GL(r,\Z)\]
is onto, see \cite{mks} Theorem $N4$, section 3.5.


Then the properties of $A_r$ given in proposition~\ref{keyprop}, together with lemma~\ref{critnilp} imply that $R(\varphi)<\infty$, showing that $G$ does not have property $R_\infty$.

\end{proof}

\section{Free soluble groups.}

Now we consider the free solvable groups  of rank $r>1$ and derived length $k$ i.e.\ $S_{r,k}=F_r/F_r^{(k)}$.

\begin{theorem}  Let $r>1$ be an integer. The group    $S_{r,1}=F_r/[F_r, F_r]=F_r/F_r^{(1)}\cong\Z^r$ does not have the $R_{\infty}$ property. On the other hand 
the groups $S_{r,k}=F_r/F_r^{(k)}$ have property  $R_{\infty}$ for all $k\geq 2$.
\end{theorem}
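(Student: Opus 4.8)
The plan is to handle the two statements by rather different means: the claim about $S_{r,1}$ is an elementary computation, while the claim about $S_{r,k}$ for $k\ge 2$ will be reduced to Theorem~\ref{free-nilpotent-finite-rank} via a well‑chosen characteristic quotient.

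For $S_{r,1}\cong\Z^r$, I would argue directly. Since $\Z^r$ is abelian, for an automorphism $\varphi$ two elements $x,y$ are Reidemeister equivalent exactly when $x-y$ lies in the image of $\mathrm{id}-\varphi$; hence the set of Reidemeister classes is the quotient group $\Z^r/(\mathrm{id}-\varphi)(\Z^r)$, and $R(\varphi)=|\det(\mathrm{id}-\varphi)|$ whenever $\det(\mathrm{id}-\varphi)\neq 0$. Taking $\varphi=-\mathrm{id}\in\GL(r,\Z)$ gives $\mathrm{id}-\varphi=2\cdot\mathrm{id}$ and hence $R(\varphi)=2^r<\infty$, so $S_{r,1}$ does not have the $R_\infty$ property.

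For $k\ge 2$ I would first record the standard fact that the $R_\infty$ property is inherited from a characteristic quotient: if $N\trianglelefteq H$ is characteristic, then every $\varphi\in\Aut(H)$ preserves $N$ and so induces $\bar\varphi\in\Aut(H/N)$, and the quotient map $H\to H/N$ carries Reidemeister classes of $\varphi$ onto Reidemeister classes of $\bar\varphi$, whence $R(\varphi)\ge R(\bar\varphi)$; thus if $H/N$ has $R_\infty$, so does $H$. Now, for $k\ge 2$ we have $F_r^{(k)}\subseteq F_r^{(2)}\subseteq \gamma_{2r+1}(F_r)F_r^{(2)}$, so there is a natural projection
\[ S_{r,k}=\frac{F_r}{F_r^{(k)}}\longrightarrow \frac{F_r}{\gamma_{2r+1}(F_r)F_r^{(2)}}=M_{r,2r},\]
whose kernel is the image in $S_{r,k}$ of the subgroup $\gamma_{2r+1}(F_r)F_r^{(2)}$ of $F_r$. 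As $\gamma_{2r+1}(F_r)F_r^{(2)}$ is verbal in $F_r$, this kernel is a verbal, hence characteristic, subgroup of $S_{r,k}$, so $M_{r,2r}$ is a characteristic quotient of $S_{r,k}$.

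Finally, $M_{r,2r}$ has the $R_\infty$ property: this is precisely Theorem~\ref{free-nilpotent-finite-rank} applied with $c=2r$ (recall $r>1$). By the inheritance principle above, $S_{r,k}$ then has the $R_\infty$ property for every $k\ge 2$, which completes the proof. I do not anticipate a genuine obstacle; the only slightly delicate point is checking that the kernel of $S_{r,k}\to M_{r,2r}$ is characteristic, and this is settled by the observation that a product of lower central and derived subgroups of a free group is verbal and that verbal subgroups remain verbal on passing to quotients.
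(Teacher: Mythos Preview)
Your argument is correct and follows essentially the same route as the paper: both reduce the $k\ge 2$ case to Theorem~\ref{free-nilpotent-finite-rank} for $M_{r,c}$ with $c\ge 2r$ via a characteristic quotient. The only cosmetic differences are that the paper passes first to $S_{r,2}=S_{r,k}/S_{r,k}^{(2)}$ and then to $M_{r,c}$, while you go directly from $S_{r,k}$ to $M_{r,2r}$ and justify the characteristic kernel via verbality; for $k=1$ the paper simply calls the result well known whereas you supply the explicit automorphism $\varphi=-\mathrm{id}$.
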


\begin{proof} 
The result for $k=1$ is well known, so let $k>1$. 

As $S_{r,2}=\displaystyle \frac{S_{r,k}}{S_{r,k}^{(2)}}$ is a quotient of  $S_{r,k}$ by a characteristic subgroup, it is enough to prove that $S_{r,2}$ has property $R_\infty$. Now, we choose $c\geq 2 r$ and consider the quotient
\[\frac{S_{r,2}}{\gamma_{c+1}(S_{r,2})}= 
\frac{F_r}{\gamma_{c+1}(F_r) F_r^{(2)}}=M_{r,c}.\]
By theorem~\ref{free-nilpotent-finite-rank} we know that $M_{r,c}$ has property 
$R_\infty$ and hence so does $S_{r,2}$ and all groups $S_{r,k}$ for $k\geq 2$. 
 \end{proof}

\section{Free nilpotent groups with a countable number of generators}
In this section we will construct a very explicit example of an automorphism $\varphi_c$ 
of $N_{\infty,c}$ (for any $c\geq 1$), such that $R(\varphi_c)=1$. It follows that $N_{\infty,c}$ does not have property $R_\infty$. This last fact will also follow from the next section, but the results in that section are far from being so explicit as the ones in this section.

\medskip

So, we will consider $F_\infty$, the free group on an infinite but still countable number of generators $x_0,\,x_1,\,x_2,\,x_3,\ldots$.
Let $\varphi\in \Aut(F_\infty)$ be the automorphism determined  by 
\[ \varphi(x_0)= x_0,\; \forall i>0:\varphi(x_i)=x_{i-1} x_i\]
and let $\varphi_c\in \Aut(N_{\infty,c})$ be the induced automorphism on $N_{\infty,c}$.
(It is easy to see that $\varphi$ does determine an automorphism of $F_\infty$ and hence also induces one on $N_{\infty,c}$).
\begin{lemma}
Let $n$ be a postive integer, then 
\[ \forall x \in \gamma_n(F_\infty),\; \exists y \in \gamma_n(F_\infty):\; x\gamma_{n+1}(F_\infty)= y \varphi(y)^{-1} \gamma_{n+1}(F_\infty).\]
\end{lemma}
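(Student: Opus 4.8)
The statement asserts that in each graded piece $L_n(F_\infty) = \gamma_n(F_\infty)/\gamma_{n+1}(F_\infty)$, the induced map $(\varphi_\ast)_n - \mathrm{id}$ is surjective, i.e. $R((\varphi_c)_n) = 1$ on every layer. The plan is to work in the free Lie ring $L(F_\infty) = \bigoplus_n L_n(F_\infty)$, which is the free Lie ring over $\Z$ on the images $\bar x_0, \bar x_1, \bar x_2, \ldots$ of the generators (this is the classical result of Magnus--Witt). The statement then translates into the purely Lie-algebraic claim: writing $\psi$ for the Lie ring automorphism of $L(F_\infty)$ induced by $\varphi$, the map $\psi - \mathrm{id}$ is surjective on each homogeneous component $L_n(F_\infty)$.

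First I would handle degree $n=1$: here $L_1(F_\infty)$ is the free $\Z$-module on $\bar x_0, \bar x_1, \ldots$, and $\psi$ is the "shift-plus-identity" map $\psi(\bar x_0) = \bar x_0$, $\psi(\bar x_i) = \bar x_{i-1} + \bar x_i$ for $i > 0$. Then $(\psi - \mathrm{id})(\bar x_i) = \bar x_{i-1}$ for $i>0$, so the image is all of $L_1(F_\infty)$ and surjectivity is immediate. (Note $\psi - \mathrm{id}$ is not injective — $\bar x_0$ is in the kernel — which is exactly why we get $R = 1$ rather than a contradiction with finiteness.)

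For general $n$, the key is to exploit the grading by a finer multi-degree. Give $L(F_\infty)$ the $\bigoplus_{i\ge 0}\N$-multigrading in which $\bar x_i$ has multidegree $e_i$; then $L_n(F_\infty)$ decomposes as a direct sum of finitely-generated multigraded pieces $L_{\mathbf d}$ over multi-indices $\mathbf d$ of total weight $n$. The map $\psi$ does \emph{not} preserve this fine grading, but it is "upper triangular" with respect to a suitable partial order: applying $\psi$ to a basic product $[\bar x_{j_1},\ldots,\bar x_{j_n}]$ (Chen/Hall basis) and expanding, the top term (in the order that sorts multi-indices by how much weight has been shifted toward lower indices) is the product itself, and all other terms have strictly smaller indices in this order. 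More precisely, order multi-indices of weight $n$ by the value $\sum_i i\cdot d_i$ (the "total index sum"): $\psi$ maps $L_{\mathbf d}$ into $L_{\mathbf d} \oplus \bigoplus_{\mathbf d' < \mathbf d} L_{\mathbf d'}$, acting as the identity on the $L_{\mathbf d}$-component of the target. Hence $\psi - \mathrm{id}$ is strictly lowering: it maps $L_{\mathbf d}$ into $\bigoplus_{\mathbf d' < \mathbf d} L_{\mathbf d'}$. Then a downward induction on the index sum $\sum_i i\cdot d_i$ shows $\psi-\mathrm{id}$ is surjective onto each $L_{\mathbf d}$ with $\sum_i i\cdot d_i > 0$, and the base case $\sum_i i\cdot d_i = 0$ corresponds to products in $\bar x_0$ alone, which vanish in $L_n$ for $n>1$ by freeness; for $n=1$ it is the case handled above. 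Summing over all $\mathbf d$ of weight $n$ gives surjectivity of $\psi - \mathrm{id}$ on $L_n(F_\infty)$, and pulling back to the group yields the claimed $y$.

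The main obstacle is making the triangularity claim precise and verifying it: one must check that when $\varphi(x_i) = x_{i-1}x_i$ is substituted into a basic commutator and the result is re-expanded in the Lie ring, every term other than the "do nothing" term genuinely has strictly smaller total index sum — this requires being careful that a term with some factor replaced by $\bar x_{i-1}$ cannot, after re-sorting into basis elements, produce a summand with index sum $\ge$ the original. This follows because the Lie bracket is multi-homogeneous, so re-expanding a fixed multigraded element only produces summands of the same multidegree, hence the same index sum; the only way to change the index sum is the substitution step itself, which can only decrease it. Once this bookkeeping is set up, the induction is routine.
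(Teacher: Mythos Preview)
Your Lie-algebraic reformulation is sound: the claim is equivalent to surjectivity of $\psi-\mathrm{id}$ on each homogeneous piece $L_n(F_\infty)$, and the observation that $\psi-\mathrm{id}$ is strictly lowering with respect to the index-sum filtration is correct. But the deduction you draw from this is not valid. A strictly-lowering endomorphism need not be surjective, and no ``downward induction on index sum'' can extract surjectivity from triangularity alone: there is no top degree to start from, and on any finite-dimensional filtered piece a strictly-lowering map is nilpotent, hence certainly not onto. What your argument actually needs is that the \emph{leading part} of $\psi-\mathrm{id}$ --- the derivation $\delta$ of $L(F_\infty)$ determined by $\delta(\bar x_i)=\bar x_{i-1}$, $\delta(\bar x_0)=0$ --- is already surjective on each $L_n$. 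Once that is known, an \emph{upward} induction on index sum does give the result, since $\psi-\mathrm{id}=\delta+R$ with $R$ dropping index sum by at least $2$: given $v$ of index sum $s$, choose $u$ of index sum $s+1$ with $\delta(u)=v$; then $(\psi-\mathrm{id})(u)-v=R(u)$ has index sum $\le s-1$ and is in the image by hypothesis. You have not addressed the surjectivity of $\delta$, and that is precisely where the content lies.

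The paper's proof is a direct double induction in the group (outer on $n$, inner on the generator index $i$), pivoting on the identity, modulo $\gamma_{n+1}$,
\[
[x_i,y] \equiv [x_{i-1},\varphi(w)]\cdot [x_i,w]\,\varphi([x_i,w])^{-1}
\]
whenever $y\equiv w\varphi(w)^{-1}$ modulo $\gamma_n$. Translated into your Lie language this is exactly the Leibniz computation $\delta([\bar x_i,\bar w])=[\bar x_{i-1},\bar w]+[\bar x_i,\delta(\bar w)]$, which reduces $[\bar x_i,v]$ (with $v=\delta(\bar w)$) to $[\bar x_{i-1},\bar w]$ modulo $\mathrm{im}(\delta)$; iterating down to $i=0$, where $\delta([\bar x_0,\bar w])=[\bar x_0,\delta(\bar w)]$, and invoking the outer hypothesis finishes. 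So proving the surjectivity of $\delta$ requires essentially the same double induction the paper carries out directly; your sketch has not replaced that work, only relocated it to a step left unjustified.
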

\begin{proof}
We use induction on $n$.\\ 
Let $n=1$, then for any generator $x_i$, we have that 
\[ x_i\gamma_2(F_\infty) = x_{i+1}^{-1}\varphi(x_{i+1}^{-1})^{-1} \gamma_2(F_\infty).\]
As $F_\infty/\gamma_2(F_\infty)$ is a free abelian group on the generators $x_i\gamma_2(F_\infty)$, we can conclude that  for all
$x\gamma_2(F_\infty)$ there exists a $y \gamma_2(F_\infty)$ such that $x\gamma_2(F_\infty) = y \varphi(y)^{-1}\gamma_2(F_\infty)$, which implies that the lemma
holds for $n=1$.

Now assume that $n>1$ and the lemma holds up to the case $n-1$.
The group $\gamma_{n}(F_\infty)/\gamma_{n+1}(F_\infty)$ is generated by the cosets of elements of the form $[x_i,y]$ where $i\geq 0$ and $y\in \gamma_{n-1}(F_\infty)$. 

\medskip

To prove the lemma, it is enough to show that any coset $[x_i,y]\gamma_{n+1}(F_\infty)$ of such an element is the product of 
cosets of the form $z\varphi(z)^{-1}\gamma_{n+1}(F_\infty)$ for some $z\in \gamma_n(F_\infty)$ (use that $\gamma_n(F_\infty)/\gamma_{n+1}(F_\infty)$ is central 
in $F_\infty/\gamma_{n+1}(F_\infty)$).

We will prove this last claim by induction on $i$. 
Let $i=0$. As we assume that the lemma is valid up to $n-1$, there exists a $w\in \gamma_{n-1}(F_\infty)$ such that 
\begin{equation}\label{fact}
 y \gamma_n(F_\infty) = w \varphi(w)^{-1} \gamma_n(F_\infty).
\end{equation}
Using this, we find
\begin{eqnarray*}
[x_0, y] \gamma_{n+1}(F_\infty) & = & [x_0 , w \varphi(w)^{-1} ]  \gamma_{n+1}(F_\infty)\\
                         & = & [x_0 , w ] [x_0, \varphi(w)^{-1} ]\gamma_{n+1}(F_\infty)\\
                         & = & [x_0 ,w] [x_0, \varphi(w) ]^{-1} \gamma_{n+1}(F_\infty)\\
                         & = & [x_0, w] (\varphi[x_0,w])^{-1}\gamma_{n+1}(F_\infty)\\
                         & = & z \varphi(z)^{-1}\gamma_{n+1}(F_\infty) \mbox{ for }z=[x_0,w]\in \gamma_n(F_\infty).
\end{eqnarray*}
Now, let $i>0$ and assume we already proved our claim for all generators $x_0, x_1, \ldots, x_{i-1}$.
Again, we use the relation (\ref{fact}) in the following computation:
\begin{eqnarray*}
[x_i,y] \gamma_{n+1}(F_\infty) & = & [x_i , w \varphi(w)^{-1} ]  \gamma_{n+1}(F_\infty)\\
                        & = & [x_i, w] [x_i, \varphi(w)^{-1} ]  \gamma_{n+1}(F_\infty)\\
                        & = & [x_i, w] [ x_{i-1}^{-1}\varphi(x_i),  \varphi(w)^{-1} ]  \gamma_{n+1}(F_\infty)\\
                        & = & [x_i, w] [ x_{i-1}^{-1},  \varphi(w)^{-1} ] [ \varphi(x_i),  \varphi(w)^{-1} ]  \gamma_{n+1}(F_\infty)\\
                        & = & [x_{i-1}, \varphi(w)] [x_i, w] \varphi([x_i,w])^{-1}  \gamma_{n+1}(F_\infty)
\end{eqnarray*}
which by the induction hypothesis on $i$ is a product of elements of the form $z\varphi(z)^{-1} \gamma_{n+1}(F_\infty)$ with 
$z\in\gamma_n(F_\infty)$. This concludes the proof of the lemma.
\end{proof}
\bigskip

\begin{proposition}
Let $c>0$ be a positive integer and let $\varphi_c$ be the automorphism of $N_{\infty,c}=F_\infty/\gamma_{c+1}(F_\infty)$ which is induced by $\varphi$.
Then $R(\varphi_c)=1$. In particular, the free nilpotent group $N_{\infty,c}$ of class $c$ on an infinite countable
number of generators does not have the $R_\infty$ property.
\end{proposition}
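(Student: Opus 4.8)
The plan is to show that the Reidemeister class of the identity element in $N_{\infty,c}$ exhausts the whole group, which is exactly the statement $R(\varphi_c)=1$. Concretely, I want to prove that for every $g\in N_{\infty,c}$ there exists $h\in N_{\infty,c}$ with $g = h\,\varphi_c(h)^{-1}$. Lifting to $F_\infty$, this amounts to showing that for every $x\in F_\infty$ there is $y\in F_\infty$ with $x \equiv y\,\varphi(y)^{-1} \pmod{\gamma_{c+1}(F_\infty)}$.

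The strategy is a downward induction on the lower central series, using the preceding lemma as the engine. Suppose $x\in F_\infty$ is given. By the lemma applied with $n=1$, there is $y_1\in F_\infty$ with $x\,\gamma_2(F_\infty) = y_1\varphi(y_1)^{-1}\gamma_2(F_\infty)$. Replace $x$ by $x_{(2)} := \varphi(y_1)\,y_1^{-1}\,x$; then $x_{(2)}\in\gamma_2(F_\infty)$, and $x$ lies in the Reidemeister class of $x_{(2)}$ modulo any $\gamma_{c+1}(F_\infty)$ since $x = (y_1 \varphi(y_1)^{-1})\cdot x_{(2)}$ is, up to the coboundary $y_1\varphi(y_1)^{-1}$, twisted-conjugate to $x_{(2)}$ — more precisely, if $x_{(2)} = u\,\varphi(u)^{-1} t$ with $t\in\gamma_{c+1}$, then $x = (y_1 u)\varphi(y_1 u)^{-1} t'$ for a suitable $t'\in\gamma_{c+1}$ because $\gamma_{c+1}$ is normal and $\varphi$-invariant. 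Now apply the lemma again at level $n=2$ to $x_{(2)}\in\gamma_2(F_\infty)$ to produce $y_2\in\gamma_2(F_\infty)$ with $x_{(2)}\gamma_3(F_\infty) = y_2\varphi(y_2)^{-1}\gamma_3(F_\infty)$, and iterate: after $c$ steps one reaches an element in $\gamma_{c+1}(F_\infty)$, which is trivial in $N_{\infty,c}$. Collecting the correction terms $y_1,y_2,\ldots,y_c$ into a single $h$ (built by successive multiplication, using that each $y_j\in\gamma_j$ so the error terms are swept into ever-deeper terms of the lower central series) gives $g = h\,\varphi_c(h)^{-1}$ in $N_{\infty,c}$.

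The main obstacle is bookkeeping the twisted-conjugacy relation cleanly across the induction: the Reidemeister relation $x\sim y\varphi(y)^{-1}x'$ is not literally "multiply by a coboundary and forget", and one must check that modifying $x$ to $x_{(2)} = \varphi(y_1)y_1^{-1}x$ really does preserve the twisted-conjugacy class modulo $\gamma_{c+1}(F_\infty)$, and that the composite element $h$ one assembles genuinely satisfies the single equation $g=h\varphi_c(h)^{-1}$ rather than merely an equation valid at each graded piece separately. The right way to handle this is to work from the top down inside the fixed finite-step group $N_{\infty,c}$: since $\gamma_{c+1}(F_\infty)$ is already killed, "$\pmod{\gamma_{n+1}}$" statements for $n\le c$ become honest equalities modulo the (central, $\varphi_c$-invariant) subgroup $\gamma_{n+1}(N_{\infty,c})$, and centrality of each successive quotient makes the multiplicativity $z_1\varphi(z_1)^{-1}z_2\varphi(z_2)^{-1} \equiv (z_1z_2)\varphi(z_1z_2)^{-1}$ hold modulo the next term — this is precisely the reduction the lemma's proof already exploits. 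Once that is set up, the induction closes and every element of $N_{\infty,c}$ is seen to be twisted-conjugate to $1$, so $R(\varphi_c)=1$; in particular $N_{\infty,c}$ fails the $R_\infty$ property.
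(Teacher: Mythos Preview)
Your proposal is correct and follows essentially the same route as the paper: both iterate the preceding lemma through the lower central series, using that each $\gamma_n/\gamma_{n+1}$ is central to merge the successive corrections into a single element $h$ with $g=h\varphi_c(h)^{-1}$. The paper packages this more cleanly as an induction on $c$ via the central extension $1\to \gamma_c(F_\infty)/\gamma_{c+1}(F_\infty)\to N_{\infty,c}\to N_{\infty,c-1}\to 1$ (together with the standard fact that $R(\varphi')=R(\varphi_{c-1})=1$ forces $R(\varphi_c)=1$), which sidesteps exactly the bookkeeping issue you flagged in your final paragraph.
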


\begin{proof} Again we proceed by induction. The case $c=1$ is clear, because the previous
lemma shows that the map Id$-\varphi_1$ is a surjective map on $N_{\infty,1}=F_\infty/\gamma_2(F_\infty)$. 

Now, let $c>1$ and assume that the proposition holds up to $c-1$. We have the following commutative diagram, where the 
horizontal rows are central group extensions
\[\xymatrix{1 \ar[r] & \gamma_c(F_\infty)/\gamma_{c+1}(F_\infty) \ar[r]\ar[d]_{\varphi'}  & 
                          F_\infty /\gamma_{c+1}(F_\infty) \ar[r]\ar[d]_{\varphi_c} & 
                               F_\infty/\gamma_{c}(F_\infty) \ar[r] \ar[d]_{\varphi_{c-1}}& 1 \\
            1 \ar[r] & \gamma_c(F_\infty)/\gamma_{c+1}(F_\infty) \ar[r]  & F_\infty /\gamma_{c+1}(F_\infty) \ar[r] & F_\infty/\gamma_{c}(F_\infty) \ar[r] & 1 }\]
and where $\varphi'$ is the restriction of $\varphi_c$ to $\gamma_c(F_\infty)/\gamma_{c+1}(F_\infty)$. By induction, we have that 
$R(\varphi_{c-1})=1$. As the previous lemma shows that $R(\varphi')=1$, we find that $R(\varphi_c)=1$. 
\end{proof}

\begin{corollary}
Let $c>0$ be a positive integer and let $\varphi'_c$ be the automorphism of $M_{\infty,c}=F_\infty/(\gamma_{c+1}(F_\infty)F_{\infty}^{(2)})$ which is induced by $\varphi$.
Then $R(\varphi'_c)=1$. In particular, $M_{\infty,c}$, the free metabelian and nilpotent group of class $c$ on an infinite countable
number of generators does not have the $R_\infty$ property.
\end{corollary}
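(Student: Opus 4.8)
The plan is to derive this immediately from the preceding proposition, using the elementary fact that a $\varphi$-equivariant surjection cannot increase the Reidemeister number. First I would identify $M_{\infty,c}$ as the quotient of $N_{\infty,c}$ by its second derived subgroup. Computing the derived subgroup of a quotient gives $N_{\infty,c}^{(2)}=F_\infty^{(2)}\gamma_{c+1}(F_\infty)/\gamma_{c+1}(F_\infty)$, and hence by the third isomorphism theorem
\[ \frac{N_{\infty,c}}{N_{\infty,c}^{(2)}}=\frac{F_\infty}{F_\infty^{(2)}\gamma_{c+1}(F_\infty)}=M_{\infty,c}. \]
Since the second derived subgroup of any group is fully invariant, $N_{\infty,c}^{(2)}$ is characteristic in $N_{\infty,c}$, so the automorphism $\varphi_c$ maps it onto itself and therefore induces an automorphism of $M_{\infty,c}$. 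By functoriality of the quotient maps $F_\infty\to N_{\infty,c}\to M_{\infty,c}$, this induced automorphism is precisely $\varphi'_c$. Thus the natural projection $p\colon N_{\infty,c}\to M_{\infty,c}$ is surjective and satisfies $p\circ\varphi_c=\varphi'_c\circ p$.

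Next I would transport Reidemeister classes along $p$. If $x=z\,y\,\varphi_c(z)^{-1}$ in $N_{\infty,c}$, applying $p$ gives $p(x)=p(z)\,p(y)\,\varphi'_c(p(z))^{-1}$ in $M_{\infty,c}$, so $p$ maps each Reidemeister class of $\varphi_c$ into a single Reidemeister class of $\varphi'_c$; the resulting map on class sets is onto because $p$ is. Hence $R(\varphi'_c)\le R(\varphi_c)$. By the preceding proposition $R(\varphi_c)=1$, and a Reidemeister number is always at least $1$, so $R(\varphi'_c)=1$. In particular $M_{\infty,c}$ does not have the $R_\infty$ property.

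I do not anticipate any real obstacle; this is a routine quotient argument of the same flavour already used for the free solvable groups. The only things to verify are bookkeeping: that $N_{\infty,c}^{(2)}$ is characteristic (immediate, being fully invariant) and that the automorphism called $\varphi'_c$ in the statement agrees with the one obtained from $\varphi_c$ by passing to the quotient (immediate from functoriality of the two successive projections out of $F_\infty$). As an alternative, one could bypass $N_{\infty,c}$ and reprove the preceding lemma and proposition verbatim inside $M_{\infty,c}$, since every commutator identity used there remains valid modulo $F_\infty^{(2)}$; but reusing the proposition as stated is shorter.
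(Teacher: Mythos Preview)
Your proposal is correct and follows exactly the approach the paper intends: the corollary is stated without proof, immediately after the proposition, precisely because it is the routine observation that $M_{\infty,c}$ is the quotient of $N_{\infty,c}$ by the characteristic subgroup $N_{\infty,c}^{(2)}$, so $R(\varphi'_c)\le R(\varphi_c)=1$. Your write-up simply makes explicit the bookkeeping the paper leaves to the reader.
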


\section{Free  groups of infinite countable rank}

In the previous section we treated free nilpotent groups of infinite countable rank. In fact,
now we will generalize these results to all free groups of infinite countable rank. However, in this section the results will be less explicit then in the previous one.

On the other hand, in the appendix due to a result of R.\ Bianconi (\cite{Bi}) it is shown 
that Proposition~\ref{propuncountable}  of this section can even be generalized to free groups on any infinite set of generators.

\medskip

Consider any surjective map $\theta: \{x_0, x_1, x_2, x_3, \ldots \}\rightarrow F_\infty$ where 
$F_\infty$ is the free group on the $x_i$ in
such a way that $\theta(x_i)$ is a word involving only the variables 
$x_0,  x_1, \ldots, x_{i-1}$. So $\theta(x_0)=1, \ \theta(x_1)=x_0^k$ (for some $k$), etc. It is not difficult to see that such a map does exist.

\medskip


Now let $\varphi: F \longrightarrow F$ be the homomorphism determined by 
$\varphi(x_i)= \theta(x_i) x_i$.
Then $\varphi$ is an automorphism of $F_\infty$ (We leave it to the reader to define a map 
$\psi:\{x_0,x_1,\ldots\}\rightarrow F_\infty$ inductively, determining a morphism 
of $F_\infty$ which is the inverse of $\varphi$). 

Now, for any  $w$ of $F_\infty$, there exists an $x_i$ such that 
\[ w = \theta(x_i)= \varphi(x_i) x_i^{-1}.\]
Said differently for any $v=w^{-1}$ of $F_\infty$, we have that $v= w^{-1} = x_i \varphi(x_i)^{-1}$, for 
some $x_i$, which shows any $v\in F_\infty$ is Reidemeister equivalent to $1$.
We have shown:
\begin{proposition}\label{propuncountable}
With the notations of above, we have that $R(\varphi)=1$. Hence, $F_\infty$ does not have 
property $R_\infty$ and neither does any of the groups $N_{\infty,c}$, $M_{\infty,c}$ or
$S_{\infty, k}$.
\end{proposition}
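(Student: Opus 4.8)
The plan is to make completely explicit the automorphism $\varphi$ of $F_\infty$ already sketched above and then to read off from its definition that every element of $F_\infty$ is Reidemeister equivalent to the identity. Concretely, I would first fix a surjective map $\theta:\{x_0,x_1,x_2,\ldots\}\to F_\infty$ with the property that $\theta(x_i)$ only involves the letters $x_0,\ldots,x_{i-1}$; such a $\theta$ exists because $F_\infty$ is countable and one may enumerate its elements $w_0,w_1,w_2,\ldots$ and then, for each $i$, choose $\theta(x_i)$ to be the first $w_j$ in this enumeration that has not yet been hit and that happens to lie in the subgroup $\langle x_0,\ldots,x_{i-1}\rangle$ — a small bookkeeping argument (inserting each $w_j$ as soon as all its letters have become available) shows surjectivity can be arranged. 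The key formal point is that $\theta(x_0)=1$ necessarily, since there are no earlier generators.

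Next I would verify that the endomorphism $\varphi$ of $F_\infty$ defined on generators by $\varphi(x_i)=\theta(x_i)x_i$ is in fact an automorphism. The cleanest way is to construct the inverse directly: define $\psi$ on generators inductively by $\psi(x_0)=x_0$ and $\psi(x_i)=\theta(x_i)^{-1}\psi$-applied-to-nothing... more precisely, since $\theta(x_i)$ is a word $W_i(x_0,\ldots,x_{i-1})$, set $\psi(x_i)=W_i(\psi(x_0),\ldots,\psi(x_{i-1}))^{-1}x_i$; because $\psi(x_0),\ldots,\psi(x_{i-1})$ are already defined, this is a valid recursion, and a direct check on generators shows $\varphi\psi=\psi\varphi=\mathrm{id}$. (Alternatively one observes that $\varphi$ fixes each subgroup $\langle x_0,\ldots,x_n\rangle$ and induces on it a surjective, hence — being a finitely generated free group mapping onto itself — bijective, endomorphism, and these patch together.) This is the step requiring the most care, but it is entirely routine.

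With $\varphi$ in hand, the conclusion is immediate: given any $v\in F_\infty$, write $w=v^{-1}$ and use surjectivity of $\theta$ to pick $x_i$ with $\theta(x_i)=w$. Then
\[
x_i\,\varphi(x_i)^{-1}=x_i\,(\theta(x_i)x_i)^{-1}=x_i\,x_i^{-1}\theta(x_i)^{-1}=\theta(x_i)^{-1}=w^{-1}=v,
\]
so $v$ is Reidemeister equivalent to $1$ (take $z=x_i$ in the defining relation $x=zy\varphi(z)^{-1}$ with $y=1$, $x=v$). Hence there is a single Reidemeister class and $R(\varphi)=1$, so $F_\infty$ does not have property $R_\infty$. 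Finally, since $\gamma_{c+1}(F_\infty)$, $\gamma_{c+1}(F_\infty)F_\infty^{(2)}$ and $F_\infty^{(k)}$ are all characteristic (indeed fully invariant) subgroups of $F_\infty$, the automorphism $\varphi$ descends to automorphisms of the quotients $N_{\infty,c}$, $M_{\infty,c}$ and $S_{\infty,k}$, and a surjection of groups that is equivariant for the respective automorphisms sends one Reidemeister class onto one Reidemeister class; therefore the induced automorphism on each quotient also has Reidemeister number $1$, and none of these groups has property $R_\infty$. I expect no genuine obstacle here — the only thing to be pedantic about is the construction of $\theta$ with the prescribed triangular support and simultaneous surjectivity.
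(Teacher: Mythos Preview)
Your proposal is correct and follows exactly the paper's approach: the paper likewise fixes a surjection $\theta$ with triangular support, sets $\varphi(x_i)=\theta(x_i)x_i$, leaves the construction of $\psi=\varphi^{-1}$ to the reader, and then performs the identical computation $v=x_i\varphi(x_i)^{-1}$ to conclude $R(\varphi)=1$. You are simply more explicit in places the paper glosses over (the existence of $\theta$, the recursive definition of $\psi$, and the descent to the characteristic quotients), but there is no substantive difference in strategy.
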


\medskip

The construction  above can be generalized to obtain for any positive integer $n$
an automorphism  $\varphi_n: F_\infty  \to F_\infty$ which has Reidemeister number $n$.

\begin{theorem} Given a positive integer $n$  then there exists an automorphism 
$\varphi_n: F_\infty \to F_\infty$ such that the number of Reidemeister classes of $\varphi_n$ is $n$.
 \end{theorem}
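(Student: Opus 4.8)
The plan is to generalize the construction from Proposition~\ref{propuncountable}, where we obtained Reidemeister number $1$ by forcing every element of $F_\infty$ to be Reidemeister equivalent to the identity. To get exactly $n$ Reidemeister classes, I would keep the ``telescoping'' idea but work only with the generators $x_n, x_{n+1}, x_{n+2}, \ldots$, leaving $x_0, x_1, \ldots, x_{n-1}$ essentially untouched, so that they survive as $n$ distinct class representatives. Concretely, I would split the generating set as $\{x_0,\dots,x_{n-1}\}$ (the ``base'') together with $\{x_n,x_{n+1},\dots\}$ (the ``tail''), choose a surjection $\theta$ from the tail onto the whole free group $F_\infty$ such that $\theta(x_i)$ is a word in $x_0,\dots,x_{i-1}$ only, and define $\varphi_n(x_i)=x_i$ for $i<n$ and $\varphi_n(x_i)=\theta(x_i)\,x_i$ for $i\geq n$. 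As before one checks $\varphi_n$ is an automorphism by writing down an inductively-defined inverse on generators.

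The key steps, in order, are as follows. First, verify that $\varphi_n$ is a well-defined automorphism of $F_\infty$: since each $\theta(x_i)$ ($i \ge n$) involves only lower-index generators, the assignment $x_i\mapsto\theta(x_i)x_i$ can be inverted generator-by-generator exactly as in the proof of Proposition~\ref{propuncountable}, and the base generators are fixed. Second, show that every element of $F_\infty$ is Reidemeister equivalent to a word in $x_0,\dots,x_{n-1}$ alone: for any $w\in F_\infty$, surjectivity of $\theta$ gives some $x_i$ (with $i\geq n$) such that $\theta(x_i)=w$, hence $w=\varphi_n(x_i)x_i^{-1}$, which is Reidemeister equivalent to $1$; more generally one reduces an arbitrary element to the base by successively clearing occurrences of high-index generators. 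This shows $R(\varphi_n)\leq n$, since $F_{n}=\langle x_0,\dots,x_{n-1}\rangle$ is fixed pointwise and its elements fall into at most $|F_n/\!\sim|$ classes — but I need this bound to be exactly $n$, so the truly delicate part is the lower bound.

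For the lower bound $R(\varphi_n)\geq n$, I would construct a ``counting'' invariant. The natural candidate is to map $F_\infty$ onto $\Z/n$ by a homomorphism $\rho$ that is constant on Reidemeister classes. Since on the twisted conjugacy relation $x\sim zy\varphi_n(z)^{-1}$ an abelianization-type invariant $\rho$ descends precisely when $\rho(z)=\rho(\varphi_n(z))$ for all $z$, i.e.\ when $\rho$ is invariant under the induced map on $H_1(F_\infty)=\bigoplus_i \Z$, I would check that the abelianized automorphism $\bar\varphi_n$ acts on each coordinate $\Z\langle x_i\rangle$ by $x_i\mapsto x_i + (\text{lower-index terms})$, which is unipotent upper-triangular, so $\mathrm{coker}(1-\bar\varphi_n)$ is computable. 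Arranging the construction (e.g.\ choosing $\theta$ so that each $\theta(x_i)$ for $i\ge n$ abelianizes into the span of $x_0,\dots,x_{n-1}$, or tuning the exponents) so that this cokernel is exactly $\Z/n$ — or more simply building a surjection $F_\infty\to\Z/n$, say $x_0\mapsto 1$ and $x_i\mapsto 0$ for $i\ge 1$, and checking it separates the $n$ classes of $1,x_0,x_0^2,\dots,x_0^{n-1}$ — yields $R(\varphi_n)\geq n$. Combined with the upper bound, $R(\varphi_n)=n$.

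The main obstacle I anticipate is making the two bounds meet exactly: it is easy to force $R(\varphi_n)$ to be finite and at most $n$, and easy to exhibit one nontrivial invariant, but one must choose $\theta$ (and possibly a few explicit exponents on the base generators) carefully enough that the quotient detecting Reidemeister classes is genuinely $\Z/n$ and not a proper quotient or a larger group. I would resolve this by being fully explicit on the base: e.g.\ let $\varphi_n$ fix $x_1,\dots,x_{n-1}$, send $x_0\mapsto x_0$, and only use the tail generators for the telescoping, then verify directly via the abelianization $\Z$ (generated by the image of $x_0$, with all other coordinates killed or collapsed appropriately) that $1-\bar\varphi_n$ has cokernel $\Z/n$ on the relevant summand. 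Once the invariant is pinned down, everything else is a routine repetition of the arguments already used in Section~5.
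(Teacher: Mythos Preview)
Your construction forces $R(\varphi_n)=1$, not $n$. With $\theta$ surjective from the tail onto all of $F_\infty$, your own upper-bound computation already shows that \emph{every} $w\in F_\infty$ is Reidemeister equivalent to $1$: choose $i\geq n$ with $\theta(x_i)=w^{-1}$ and note $x_i\varphi_n(x_i)^{-1}=x_i(\theta(x_i)x_i)^{-1}=\theta(x_i)^{-1}=w$. So the upper bound is $1$, not $n$, and no lower-bound argument can rescue it. Your proposed invariant confirms this: a homomorphism $\rho:F_\infty\to\Z/n$ is constant on twisted conjugacy classes only if $\rho=\rho\circ\varphi_n$, i.e.\ $\rho(\theta(x_i))=0$ for every $i\geq n$; since $\theta$ is surjective this forces $\rho\equiv 0$. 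Equivalently, the image of $1-\bar\varphi_n$ on $H_1(F_\infty)$ contains the abelianization of every $\theta(x_i)$, hence is everything, and the cokernel is trivial. Leaving $x_0,\dots,x_{n-1}$ fixed does not create new classes, because the telescoping on the tail already collapses them.

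The missing idea is that you cannot keep $\theta$ fully surjective and simultaneously have a nontrivial abelian invariant; a correction term must be inserted into $\varphi_n$ so that an invariant survives while surjectivity at the level of Reidemeister representatives is retained. The paper does this by setting $\varphi_n(x_k)=\theta(x_k)^{-1}x_k\,x_0^{\,i}$, where $i\in\{0,\dots,n-1\}$ is the exponent sum of $\theta(x_k)$ modulo $n$. This choice makes the exponent sum of $\varphi_n(x_k)$ congruent to $1\pmod n$ for every $k$, so exponent sum modulo $n$ is a genuine Reidemeister invariant separating $1,x_0,\dots,x_0^{\,n-1}$; and for any $w=\theta(x_\ell)$ with $x_\ell\in J_k$ one computes $x_\ell\,x_0^{k}\,\varphi_n(x_\ell)^{-1}=w$, so every element lies in the class of some $x_0^{k}$. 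The point is that the correction $x_0^{i}$ is tuned to the exponent sum of $\theta(x_k)$, which is exactly what makes the two bounds meet at $n$.
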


\begin{proof}
Let us consider a map $\theta: \{x_0, x_1, x_2, x_3, ...\} \to F_\infty$ as above, i.e.\ it is surjective and $\theta(x_i)$ is a word involving only the variables 
$x_0,  x_1, ..., x_{i-1}$. 
Now, for $i=0,1,2,\ldots, n-1$ we let $J_i\subset \{x_0,x_1,x_2,\ldots\}$ 
be the subset of those  elements $x_k$ such that the exponent sum of the word $\theta(x_k)$ is congruent 
to $i$ mod $n$, and denote $W_i=\theta(J_i)$.   So we obtain that $ \{x_0,x_1,x_2,\ldots\}=J_0\cup J_1\cup\cdots \cup J_{n-1}$ and $ F_\infty=W_0\cup W_1\cup\cdots \cup W_{n-1}$ (disjoint unions). 

\medskip

Define the morphism $\varphi_n:F_\infty  \to F_\infty$ with $\varphi_n(x_k)=
\theta(x_k)^{-1}x_kx_0^{i}$, where $i\in \{0,1,\ldots,n-1\}$ is such that $x_k\in J_i$. 
Because of the properties of $\theta$ it is not hard to show that $\varphi_n$ is an
automorphism of $F_\infty$.


Now we claim: 
\begin{enumerate}
\item\label{claim1} Any element $w\in F$ belongs to the Reidemeister class of one of the elements of the set $\{1,x_0,x_0^2,...,x_0^{n-1}\}$.
\item\label{claim2} The elements $\{1,x_0,x_0^2,...,x_0^{n-1}\}$ belong to distinct Reidemeister classes.
\end{enumerate}
Let us proof claim (\ref{claim1}). 
Given $w\in F_\infty$ then there exist $x_{\ell}$ such that $\theta(x_{\ell})=w$
where  $x_{\ell}\in J_k$ for some $k\in \{0,...,n-1\}$. Then $x_0^k$ belongs to the same Reidemeister class as the element  $x_{\ell}x_0^k\varphi_n(x_{\ell})^{-1}=
x_{\ell}x_0^k(\theta(x_{\ell})^{-1}x_{\ell}x_0^k)^{-1}=
x_{\ell}x_0^k x_0^{-k}x_{\ell}^{-1}\theta(x_{\ell})=w$ and  the result folllows.

\medskip

To show   part (\ref{claim2}) observe that if two elements $w_1, w_2$ are in the same Reidemeister class, then
 they have the same exponent sum modulo $n$. This follows because for any $w\in F_\infty$ and any 
 generator $x_k\in J_i$ 
 (for some $i$) the related elements $w$ and  
 $x_k w(\varphi_n(x_k))^{-1}=x_kw(\theta(x_k)^{-1}x_kx_0^{i})^{-1}$ have the same exponent sum  from the defintion of $J_i$.  So the proof is complete.

\end{proof}


\section*{Appendix}

A relevant ingredient to construct an automorphism  of $F_\infty$ with Reidemeister number 1 was the existence of a surjective map $\theta: \{x_0, x_1, x_2, x_3, ..\}\rightarrow F_\infty$ where 
$F_\infty$ is the free group on the $x_i$ in
such a way that $\theta(x_i)$ is a word involving only the variables 
$x_0,  x_1, ..., x_{i-1}$. This result has been generalized by R. Bianconi as follows:

\begin{theorem} Let $X$ be an infinite set and $F(X)$ the free group generated by $X$. Let $\kappa=|X|$ be the cardinality of $X$ and enumerate $X=\{x_{\alpha}:\alpha<\kappa\}$. Define $X_{\alpha}=\{x_{\beta}:\beta<\alpha\}$. Then there is a surjection $h:X\to F(X)$, such that for all $\alpha<\kappa$, $h(x_{\alpha})\in F(X_{\beta})$, for some $\beta<\alpha$, assuming that $F(\varnothing)=\{1\}$.

\end{theorem}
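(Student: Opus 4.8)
The plan is to mimic the countable construction by transfinite recursion, but with the crucial twist that surjectivity onto $F(X)$ must be arranged along the way rather than "for free" as in the countable case. First I would fix a bijection $X \to F(X)$; this exists because $|F(X)| = \kappa$ for infinite $\kappa$ (a standard cardinal-arithmetic fact: $F(X)$ is a countable union of sets of size $\kappa$, namely the reduced words of each finite length, so $|F(X)| = \aleph_0 \cdot \kappa = \kappa$). Write this bijection as $\alpha \mapsto w_\alpha$, so that $F(X) = \{ w_\alpha : \alpha < \kappa\}$ enumerates every element exactly once. The goal is to define $h(x_\alpha)$ so that (i) $h(x_\alpha) \in F(X_\beta)$ for some $\beta < \alpha$, and (ii) every $w_\gamma$ is hit.

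The key idea is to define $h$ by recursion on $\alpha$ while maintaining a bookkeeping function that records which targets have already been realized. At stage $\alpha$, having defined $h$ on $X_\alpha = \{x_\beta : \beta < \alpha\}$, I would let $\gamma(\alpha)$ be the least ordinal $\gamma < \kappa$ such that $w_\gamma$ is not yet in the image $h(X_\alpha)$, and then set $h(x_\alpha) = w_{\gamma(\alpha)}$ \emph{provided} $w_{\gamma(\alpha)}$ happens to lie in some $F(X_\beta)$ with $\beta < \alpha$; otherwise set $h(x_\alpha) = 1 \in F(X_0) = F(\varnothing)$ (a legal value whenever $\alpha > 0$, and for $\alpha = 0$ we are forced to take $h(x_0) = 1$ anyway since $F(\varnothing) = \{1\}$). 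The point is that each element $w$ of $F(X)$ involves only finitely many generators, hence lies in $F(X_\beta)$ for every $\beta$ large enough; so the "bad" case in the recursion is only a temporary delay. I would then argue surjectivity: given any target $w_\gamma$, consider the club of stages at which $\gamma(\alpha)$ has caught up to $\gamma$, and use that once $\alpha$ exceeds the (finite, hence bounded) indices of the generators appearing in $w_\gamma$, the definition will actually assign $h(x_\alpha) = w_\gamma$ at the first such opportunity. Making this "first opportunity actually occurs" argument precise is where one must be careful about the interaction between the $\gamma(\alpha)$ bookkeeping and the side condition $w_{\gamma(\alpha)} \in F(X_\beta)$.

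The main obstacle, and the heart of the proof, is verifying that the bookkeeping does not stall: one must show that for every $\gamma < \kappa$ there is in fact a stage $\alpha$ with $h(x_\alpha) = w_\gamma$. The danger is that $\gamma(\alpha)$ could oscillate or that the side condition could block the assignment of $w_\gamma$ indefinitely. The resolution is a counting argument on initial segments: for $\alpha$ of uncountable cofinality (or any limit $\alpha$ past a suitable point), $h(X_\alpha) = \bigcup_{\beta<\alpha} \{h(x_\beta)\}$ has cardinality at most $|\alpha| < \kappa$ when $|\alpha| < \kappa$, so $\gamma(\alpha)$ is well-defined and, being least, is nondecreasing in a controlled way; meanwhile the side condition $w_{\gamma(\alpha)} \in F(X_\beta)$ for some $\beta < \alpha$ holds automatically as soon as $\alpha$ exceeds the supremum of the (finitely many) indices of generators occurring in $w_{\gamma(\alpha)}$, which for a fixed target value is a fixed ordinal $< \alpha$ for all large $\alpha$. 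One then checks by induction on $\gamma$ that each $w_\gamma$ is enumerated: if $w_0, \dots, w_{\gamma}$ have all been realized by stage $\alpha_0$, then from some stage on $\gamma(\alpha) = \gamma+1$, and at the first stage $\alpha$ beyond the indices appearing in $w_{\gamma+1}$ we get $h(x_\alpha) = w_{\gamma+1}$. This completes the construction, and the theorem follows. Finally, I would remark that, exactly as in the countable case treated in Proposition~\ref{propuncountable}, the resulting $\theta = h$ yields an automorphism $\varphi$ of $F(X)$ with $\varphi(x_\alpha) = h(x_\alpha) x_\alpha$ and $R(\varphi) = 1$, so $F(X)$ fails the $R_\infty$ property for every infinite $X$.
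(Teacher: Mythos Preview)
Your greedy scheme has a genuine gap at singular cardinals. The transfinite induction on $\gamma$ that you sketch (``if $w_0,\dots,w_\gamma$ have all been realized by stage $\alpha_0$, then \dots\ we get $h(x_\alpha)=w_{\gamma+1}$'') only treats the successor step; at a limit $\lambda$ you need the stages at which the $w_\gamma$ ($\gamma<\lambda$) are hit to be bounded below $\kappa$, and for singular $\kappa$ this can fail with an \emph{arbitrary} enumeration of $F(X)$. Concretely, take $\kappa=\aleph_\omega$ and choose the bijection so that $w_n=x_{\aleph_n}$ for $n<\omega$ (and $w_\omega=1$, say). Then your rule forces $h(x_\alpha)=1$ for every $\alpha\le\aleph_0+1$ (since $w_0=x_{\aleph_0}$ is not in any $F(X_\beta)$ with $\beta\le\aleph_0$), then $h(x_{\aleph_0+2})=w_0$, then $h(x_\alpha)=1$ up to $\aleph_1+1$, then $h(x_{\aleph_1+2})=w_1$, and so on. After all $\aleph_\omega$ stages the image of $h$ is just $\{1,x_{\aleph_0},x_{\aleph_1},\dots\}$, so $h$ is not surjective. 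Your ``counting argument on initial segments'' only shows $\gamma(\alpha)$ is well-defined, not that it is cofinal in $\kappa$.

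The paper avoids this by not using a single arbitrary enumeration. Instead it stratifies: it enumerates each slice $F(X_{\omega_{\alpha+1}})\setminus F(X_{\omega_\alpha})$ by the ordinals in $[\omega_\alpha,\omega_{\alpha+1})$, and then builds $h$ level by level along the aleph hierarchy, i.e.\ constructs surjections $h_{\omega_\beta}:X_{\omega_\beta}\to F(X_{\omega_\beta})$ by recursion on $\beta$. At a limit $\gamma$ one simply takes the union of the $h_{\omega_\beta}$; at a successor $\gamma=\beta+1$ one runs essentially your greedy argument inside the block $[\omega_\beta,\omega_{\beta+1})$, and here the regularity of $\aleph_{\beta+1}$ is exactly what guarantees that the increasing sequence of chosen indices stays below $\omega_{\beta+1}$. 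In short, your idea is right for regular $\kappa$, but to cover singular $\kappa$ you must either arrange the enumeration so that each $w_\gamma$ lies in $F(X_\gamma)$ (roughly), or decompose the construction along the cardinal filtration as the paper does.
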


\begin{proof}
For convenience, we treat firstly the case $|X|=\omega=\aleph_0$.

Enumerate $F(X)=\{y_n:n<\omega\}$. For each $k<\omega$ let $n_k=\min\{m<\omega: y_k\in F(X_m)\}$. Now we inductively choose an increasing sequence $p_k>n_k$, $k<\omega$. Define $h:X\to F(X)$ by $h(x_{p_k})=y_k$, and $h(x_j)=1$, where $j<\omega$ are the indices not present in the sequence $p_k$. In this way we have proved the theorem for the denumerable case.

\medskip

For the uncountable case we enumerate $F(X)$ conveniently in order to deal with both cases of regular and of singular cardinalities. We use $\aleph_{\beta}$ for the cardinal and $\omega_{\beta}$ for the corresponding (von Neumann) ordinal.

Let $|X|=\aleph_{\gamma}$, with $\gamma>0$. Enumerate $F(X_{\omega_{\alpha+1}})\setminus F(X_{\omega_{\alpha}})=\{y_{\beta}:\omega_{\alpha}\leq\beta<\omega_{\alpha+1}\}$, for each $\alpha$ such that $0\leq\alpha<\gamma$ Observe that for each limit ordinal $\lambda<\gamma$, $F(X_{\lambda})=\bigcup_{\beta<\lambda}F(X_{\beta})$, so that enumeration carries directly over to the limit cardinal case.

Let $h_{0}:X_{\omega}\to F(X_{\omega})$ be the surjection just obtained.

Suppose that we have already defined $h_{\omega_{\beta}}:X_{\omega_{\beta}}\to F(X_{\omega_{\beta}})$ satisfying the condition of the theorem, for $\beta<\gamma$.

if $\gamma$ is a limit ordinal, then we define $h_{\omega_{\gamma}}:X_{\omega_{\gamma}}\to F(X_{\omega_{\gamma}})$ as the union of all the $h_{\omega_{\beta}}$ and we are done, because $\omega_{\gamma}=\bigcup_{\beta<\gamma}\omega_{\beta}$.

Otherwise, $\gamma=\beta+1$ and we argue as follows.

For each ordinal $\eta$, $\omega_{\beta}\leq\omega_{\beta+1}$, let $\varphi_{\eta}=\min\{\xi:y_{\eta}\in F(X_{\xi})\}$. Because of the enumeration we have done in $F(X)$, we have that $\omega_{\beta}<\varphi_{\eta}<\omega_{\beta+1}$. By transfinite induction we can obtain an increasing sequence of ordinals $\psi_{\eta}>\varphi_{\eta}$, such that $\omega_{\beta}<\psi_{\eta}<\omega_{\beta+1}$, by choosing the smallest ordinal bigger than the previously chosen and bigger than the corresponding $\varphi_{\eta}$. Because $\aleph_{\beta+1}$ is a regular cardinal, this choice can be done for all $\eta<\omega_{\beta+1}$ (the regularity of the cardinal guarantees that there are plenty of ordinals to be chosen at limit ordinals $\eta$).

We define $h_{\beta+1}$ by sending $x_{\psi_{\eta}}$ to $y_{\eta}$ and the remaining elements of $X_{\beta+1}\setminus X_{\beta}$ to the unit element 1.

This finishes the proof.
\end{proof}

We can now show that a free group on any infinite set of generators does not have property $R_\infty$.

\begin{proposition}
Let $X$ be an infinite set and $F(X)$ the free group generated by $X$. Let
$\varphi$ be the automorphism of $F(X)$ determined by 
 $\varphi(x_{\alpha})=h(x_{\alpha})x_\alpha$, where $h$ is the surjection from the theorem above. Then,
the homomorphism $\varphi$ is an automorphism. Further, 
we have that $R(\varphi)=1$. Hence, $F(X)$ does not have 
property $R_\infty$.
 \end{proposition}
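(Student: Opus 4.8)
The plan is to follow the same strategy that worked in the countable case (Proposition~\ref{propuncountable}), now fed by Bianconi's surjection $h$ in place of the explicit $\theta$. The statement has three assertions: that $\varphi$ is a well-defined homomorphism, that it is an automorphism, and that $R(\varphi)=1$. The first is immediate from the universal property of the free group $F(X)$: the assignment $x_\alpha\mapsto h(x_\alpha)x_\alpha$ extends uniquely to an endomorphism, so there is nothing to check there.

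For the bijectivity of $\varphi$, I would argue by transfinite induction on $\alpha<\kappa$ that $x_\alpha$ lies in the image of $\varphi$ and, more usefully, construct the inverse endomorphism $\psi$ directly. The key structural fact from Bianconi's theorem is that $h(x_\alpha)\in F(X_\beta)$ for some $\beta<\alpha$, i.e.\ $h(x_\alpha)$ is a word in generators of strictly smaller index. So I would define $\psi$ on generators by transfinite recursion: assuming $\psi(x_\gamma)$ has been defined for all $\gamma<\alpha$ in such a way that $\psi$ restricted to $F(X_\alpha)$ is already the inverse of $\varphi|_{F(X_\alpha)}$ — note $F(X_\alpha)$ is $\varphi$-invariant precisely because $\varphi(x_\gamma)=h(x_\gamma)x_\gamma\in F(X_\alpha)$ for $\gamma<\alpha$ — I set $\psi(x_\alpha)=\varphi(h(x_\alpha))^{-1}x_\alpha$, which makes sense since $h(x_\alpha)\in F(X_\beta)\subseteq F(X_\alpha)$ so $\varphi(h(x_\alpha))$ is already under control. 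A short computation then shows $\psi(\varphi(x_\alpha))=x_\alpha$ and $\varphi(\psi(x_\alpha))=x_\alpha$, and since both composites fix a generating set they are the identity; hence $\varphi$ is an automorphism. (This is the ``we leave it to the reader'' step from the countable case, now made with transfinite recursion.)

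For $R(\varphi)=1$, the argument is literally the one in Proposition~\ref{propuncountable} with $\theta$ replaced by $h$. Given any $w\in F(X)$, surjectivity of $h$ gives some $x_\alpha$ with $h(x_\alpha)=w$; then $w=\varphi(x_\alpha)x_\alpha^{-1}$, so $w^{-1}=x_\alpha\varphi(x_\alpha)^{-1}$, exhibiting $w^{-1}$ as Reidemeister equivalent to $1$. Since $w$ was arbitrary and $w\mapsto w^{-1}$ is a bijection of $F(X)$, every element of $F(X)$ is Reidemeister equivalent to $1$, so there is exactly one Reidemeister class and $R(\varphi)=1$. In particular $F(X)$ does not have property $R_\infty$.

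The only genuinely non-routine point is the verification that $\varphi$ is bijective, and even that is routine once one observes the ``triangularity'' built into Bianconi's construction: because $h(x_\alpha)$ uses only strictly earlier generators, the subgroups $F(X_\alpha)$ form a $\varphi$-invariant filtration and one can invert level by level by transfinite recursion. So I expect no real obstacle; the proof is a faithful transcription of the countable argument with ordinary induction upgraded to transfinite induction. I would keep the exposition brief, perhaps only spelling out the definition of the inverse $\psi$ on generators and the one-line Reidemeister computation, and leaving the straightforward inductive verifications to the reader as the authors do elsewhere.
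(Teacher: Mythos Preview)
Your approach matches the paper's: build a left inverse $\psi$ by transfinite recursion using the triangularity of $h$, observe $\varphi$ is surjective (or, as you do, verify both composites), and repeat the countable Reidemeister computation verbatim. The paper's proof is essentially one line for each of these, and yours is the same outline.

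There is one slip worth fixing. You set $\psi(x_\alpha)=\varphi(h(x_\alpha))^{-1}x_\alpha$, but this does not produce an inverse: with that choice $\psi(\varphi(x_\alpha))=\psi(h(x_\alpha))\,\varphi(h(x_\alpha))^{-1}x_\alpha$, which equals $x_\alpha$ only if $\psi$ and $\varphi$ agree on $h(x_\alpha)$. The correct recursive definition (the one the paper uses) is $\psi(x_\alpha)=\psi(h(x_\alpha))^{-1}x_\alpha$; this makes sense because $h(x_\alpha)\in F(X_\beta)$ with $\beta<\alpha$, so $\psi(h(x_\alpha))$ is already defined by the inductive hypothesis, and then $\psi(\varphi(x_\alpha))=\psi(h(x_\alpha))\psi(x_\alpha)=x_\alpha$ immediately. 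Your remark that ``$\varphi(h(x_\alpha))$ is already under control'' is the tell: $\varphi$ is globally defined from the start, so nothing about it needs to be under control; it is $\psi$ that is being built. With that correction your argument is complete and agrees with the paper.
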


\begin{proof} For the first part let us define inductively a map $\psi: X \to F(X)$.
Define $\psi(x_0)=x_0$. Suppose  inductively that given a $\beta<\kappa$, with $\kappa=|X|$,
 we have defined $\psi$ 
on $X_{\beta}$ such that $\psi\circ \varphi(x_{\alpha})=x_{\alpha}$.
Define $\psi(x_\beta)=\psi(h(x_{\beta}))^{-1}x_{\beta}$. Then we certainly have $\psi\circ\varphi(x_{\beta})=x_{\beta}$. 

It is obvious that the morphism $\varphi$ is surjective.
Since the compostion $\psi\circ\varphi$ is the identity,
$\varphi$ is injective too and hence $\varphi$ is an automorphism.

 That   $R(\varphi)=1$ is similar to the countable case. 
\end{proof}

\end{document}